\title{Bounded distortion homeomorphisms on ultrametric spaces}
\author[B.~Hughes]{Bruce Hughes}
\address{Department of Mathematics\\ Vanderbilt University\\ Nashville, TN 37240 U.S.A.}
\email{bruce.hughes@vanderbilt.edu}
\thanks{The first-named author is supported in part by NSF Grant DMS--0504176.}
\author[Á.~Martínez-Pérez]{Álvaro Martínez-Pérez}
\address{Departamento de Geometría y Topología\\ Universidad Complutense de Madrid\\ Madrid 28040  Spain}
\email{@mat.ucm.es}
\author[M.~A.~Morón]{Manuel A. Morón}
\address{Departamento de Geometría y Topología\\ Universidad Complutense de Madrid\\ Madrid 28040  Spain}
\email{mamoron@mat.ucm.es}
\thanks{The second- and third-named authors are partially supported by MTM 2006-00825.}
\dedicatory{Dedicated to José María Montesinos on the occasion of his 65th birthday}
\newtheorem{definicion}{Definition}[section]
\newtheorem{prop}[definicion]{Proposition}
\newtheorem{teorema}[definicion]{Theorem}
\newtheorem{ejp}[definicion]{Example}
\newtheorem{corollary}[definicion]{Corollary} 
\theoremstyle{definition} 
\newtheorem{definition}[definicion]{Definition} 
\newtheorem{notation}[definicion]{Notation}
\theoremstyle{remark}
\newtheorem{remark}[definicion]{Remark}
\newcommand{\br}{\ensuremath{\mathbb{R}}} 
\newcommand{\co}{\ensuremath{\colon}} 
\newcommand{\bn}{\ensuremath{\mathbb{N}}} 
\newcommand{\e}{\ensuremath{\mathrm{e}}} 
\begin{document}

\begin{abstract} 
It is well-known that quasi-isometries between $\br$-trees induce power quasi-symmetric homeomorphisms
between their ultrametric end spaces.
This paper investigates power quasi-symmetric homeomorphisms between bounded, complete, uniformly perfect, ultrametric spaces (i.e., those
ultrametric spaces arising up to similarity as the end spaces of bushy trees).
A bounded distortion property is found that characterizes  power quasi-symmetric homeomorphisms
between such ultrametric spaces that are also pseudo-doubling. 
Moreover, examples are given showing the extent to which the power quasi-symmetry of homeomorphisms is not captured by the
quasiconformal and bi-H\"older conditions for this class of ultrametric spaces.
\end{abstract}

\maketitle
\tableofcontents

\begin{footnotesize}
\noindent
{\bf Keywords:} Tree, real tree, bushy  tree, ultrametric, end space, quasi-isometry, quasiconformal, quasi-symmetric, PQ-symmetric, doubling metric space\\
{\bf Mathematics Subject Classification (2000):} 54E40, 30C65, 53C23
\end{footnotesize}

\section{Introduction}
\label{sec:introduction}

A theme in the study of noncompact spaces is the extent to which the geometry and topology of 
such a space is reflected in a natural boundary at infinity.

By choosing a root $v$ for an $\br$-tree $T$, the boundary at infinity naturally becomes a complete, ultrametric space $end(T,v)$ of diameter $\leq 1$,
called the \emph{end space} of $(T,v)$.
It is known that a quasi-isometry between $\br$-trees induces a bi-H\"older, quasiconformal homeomorphism 
between their ultrametric end spaces. In fact, the induced homeomorphism has the stronger power quasi-symmetric, or PQ-symmetric, property.

This paper is concerned with the natural question, How close do the bi-H\"older and quasiconformal  conditions come to characterizing PQ-symmetric homeomorphisms on bounded, complete ultrametric spaces? 
What if one restricts to bounded, complete, uniformly perfect, ultrametric spaces? 
The significance of restricting to this class of ultrametric spaces is that they are (up to similarity) exactly the ones that arise as end spaces of
bushy $\br$-trees. 
We introduce a notion of bounded distortion for homeomorphisms and 
a pseudo-doubling property of metric spaces.
We show that the bounded distortion property characterizes PQ-symmetric homeomorphisms on
bounded, complete, uniformly perfect, ultrametric spaces that also have the pseudo-doubling property. 
The following is a statement of our main positive result.

\begin{teorema}\label{thm:main} 
A homeomorphism $h\co X\to Y$ between bounded, complete, uniformly perfect, pseudo-doubling, ultrametric spaces
is PQ-symmetric if and only if $h$ is a bounded distortion equivalence. 
\end{teorema}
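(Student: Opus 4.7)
The plan is to treat the two implications separately. For the forward direction, that PQ-symmetric implies bounded distortion, I expect the argument to be essentially formal: given the triples or quadruples of points whose distance ratios the bounded distortion condition controls, I would invoke the PQ-symmetric inequality with an appropriate choice of base point and use the ultrametric (isosceles triangle) property to simplify. Since the PQ-symmetric inequality is designed exactly to give polynomial control of distance ratios, this direction should reduce to unpacking definitions, with the ultrametric property eliminating the awkward cross-terms that would appear in the general metric case.

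For the converse, that a bounded distortion equivalence between such spaces is PQ-symmetric, my strategy is to exploit the tree structure inherent in a bounded, complete ultrametric space: the set of closed balls forms a rooted tree under reverse inclusion, a homeomorphism $h\co X\to Y$ induces a bijection between these ball trees, and geometric conditions on $X$ and $Y$ translate to combinatorial conditions on the trees. Uniform perfectness guarantees that beneath every ball sits a proper sub-ball of comparable radius, so the trees are bushy; pseudo-doubling bounds the number of children of comparable radius, so the trees have bounded local complexity. The bounded distortion property should then say that $h$ moves balls to balls with a controlled change in radius at each level.

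To derive the PQ-symmetric inequality at a triple $x,y,z\in X$, I would use the ultrametric to reduce to the essentially isosceles configuration, and then interpolate between the two scales $d(x,y)$ and $d(x,z)$ by a chain of nested balls. Applying bounded distortion along each step of the chain gives multiplicative control on the ratio of image diameters, while pseudo-doubling together with uniform perfectness forces the number of steps in any such chain to be comparable to $\log\bigl(d(x,y)/d(x,z)\bigr)$. Exponentiating the multiplicative per-step constant over this logarithmic chain length yields precisely the power-law bound $C\max\{t^{p},t^{1/p}\}$ with $t=d(x,y)/d(x,z)$ that defines PQ-symmetry.

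The main obstacle is the converse direction, and specifically the simultaneous quantitative use of the pseudo-doubling and uniformly perfect conditions to produce the chain of balls with the correct length. Pseudo-doubling must be strong enough to ensure that the number of distinct scales realized between two radii is at most logarithmic in their ratio, while uniform perfectness must be strong enough to guarantee that a chain actually exists between any two comparable scales. Matching these two bounds against the multiplicative distortion constant is where the power exponent $p$ of the PQ-symmetric inequality will ultimately be manufactured; everything else in the proof should then reduce to bookkeeping with the ultrametric inequality.
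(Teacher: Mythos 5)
Your forward direction is fine, and is even easier than you suggest: taking $t=1$ in the quasi-symmetry control function immediately gives $D_f(x,\epsilon)\le\eta(1)$ for every $x$ and $\epsilon$, with no need for the ultrametric inequality; the only point to remember is that inverses of quasi-symmetric maps are again quasi-symmetric, so that $f^{-1}$ also has bounded distortion. Your framework for the converse also matches the paper's reduction: realize $X$ and $Y$ as end spaces of bushy trees (uniform perfectness corresponds exactly to bushiness), and use pseudo-doubling to bound the number of distinct scales in each multiplicative window, which the paper exploits to replace the trees by simplicial ones up to a PQ-symmetric change of metric.

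The gap is in your central step, that ``applying bounded distortion along each step of the chain gives multiplicative control on the ratio of image diameters.'' Bounded distortion only compares pairs of points lying on the \emph{same} sphere $S(x_0,\epsilon)$; it gives no two-sided relation between image distances at two different scales, however close. Concretely, if $w\in S(x,\rho)$ and $w'\in S(x,\rho')$ with $\rho<\rho'$, then $d(w,w')=\rho'$ and the only inequality you can extract (bounded distortion at $w'$ plus the ultrametric inequality in $Y$) is $d(f(x),f(w))\le K\,d(f(x),f(w'))$; the reverse ratio $d(f(x),f(w'))/d(f(x),f(w))$ can be arbitrarily large in a single step. Chaining the one inequality you do have yields $d(f(x),f(z))\le K^{m}\,d(f(x),f(y))$, which points the wrong way for both halves of the PQ-symmetric estimate, so the chain does not close. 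What is actually needed---and what the paper supplies in Theorem~\ref{prop:BQC implies PQ}---is a contradiction argument: assume the image ratio exceeds $q_1t^{p}$; bounded distortion at the base point confines the image bifurcation points of each of the $k+2\approx\ln t$ domain bifurcation levels to an interval of length $A$ along $h(G_0)$, so a large image ratio forces a gap of length $2K$ missed by all these intervals; bushiness of the \emph{target} tree produces an end bifurcating in that gap, and surjectivity of $h$ pulls it back to a point of $X$ that must bifurcate from $G_0$ outside the range of accounted-for scales, contradicting bounded distortion at the extreme scales. This use of target bushiness together with surjectivity to manufacture a witness is the missing idea; without it bounded distortion cannot force the image distance to decay as the domain distance decays, and Example~\ref{example:BDE not PQS} shows the statement genuinely fails once pseudo-doubling is dropped.
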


The class of ultrametric spaces to which Theorem~\ref{thm:main} applies includes
end spaces of rooted, geodesically complete, bushy, simplicial $\br$-trees---see Remark~\ref{remark:pseudo-doubling}.

The more difficult of the proof of Theorem~\ref{thm:main} is in showing that bounded distortion equivalences on the given
class of ultrametric spaces are PQ-symmetric. This is accomplished in Corollary~\ref{cor:BDE PQ for pd}. The proof of that
corollary  relies on
Theorem~\ref{prop:BQC implies PQ}, which establishes Theorem\ref{thm:main} for the case of end spaces of rooted, geodesically complete, 
simplicial, bushy $\br$-trees. 
The converse of Theorem~\ref{thm:main} is Proposition~\ref{prop:QS implies BQE}. 

In Example~\ref{example:BDE not PQS}
it is shown that this theorem does not hold for compact, uniformly perfect, ultrametric spaces that are not pseudo-doubling.
Moreover, we give several examples in Section~\ref{sec:examples} illuminating the difference between 
bi-H\"older, quasiconformal homeomorphisms on one hand,  and PQ-symmetric homeomorphisms on the other hand for this class of ultrametric spaces.
In fact, the examples are defined on end spaces of locally finite, simplicial trees of minimal vertex degree three and answer
several questions of Mirani \cite{Mirani2006, Mirani2008}.

The study of  quasi-isometries between trees and 
the induced maps on their end spaces has a  voluminous literature.
This is often set in the more general context of hyperbolic metric spaces
and their boundaries.
See Bonk and Schramm \cite{BoSchr},
Buyalo and Schroeder \cite{BS},
Ghys and de la Harpe \cite{GhysdelaHarpe},
Mart\'inez-P\'erez \cite{M},
Mirani \cite{Mirani2006, Mirani2008},
and Paulin \cite{Pau}
to name a few. 
For homeomorphisms induced by $\br$-tree morphisms that are less restrictive than quasi-isometries, see Mart\'inez-P\'erez and Mor\'on \cite{M-M}.
For $\br$-tree morphisms more restrictive 
than quasi-isometries, see Hughes \cite{Hug}.


\section{Preliminaries on trees, end spaces, and ultrametrics}

In this section,  we recall the definitions of the trees and their end spaces that are
relevant to this paper. 
We also describe a well-known correspondence between trees and ultrametric
spaces. See Fe{\u\i}nberg \cite{Feinberg} for an early result along these lines and  Hughes \cite{Hug} for additional background.

\begin{definition} Let $(T,d)$ be a metric space. 
\begin{enumerate}
	\item $(T,d)$ is an \emph{$\mathbb{R}$--tree} if $T$ is uniquely arcwise connected and for all $x, y \in T$, the
unique arc from $x$ to $y$, denoted $[x,y]$, is isometric to the
subinterval $[0,d(x,y)]$ of $\mathbb{R}$.
\item
A \emph{rooted $\mathbb{R}$--tree} $(T,v)$ consists of an $\br$-tree $(T,d)$ and a point $v\in T$, called  the \emph{root}.
\item
\label{extensiongeod} A rooted $\mathbb{R}$--tree $(T,v)$ is
\emph{geodesically complete} if every isometric embedding
$f\co [0,t]\rightarrow T$ with $t>0$ and  $f(0)=v$ extends to an isometric embedding $F\co [0,\infty) \rightarrow T$. 
\item A \emph{simplicial $\mathbb{R}$--tree} is an $\br$-tree $(T,d)$ such that $T$ is the (geometric realization of) a simplicial complex
and every edge of $T$ is isometric to the closed unit interval $[0,1]$.
\end{enumerate}
\end{definition}

\begin{definition} An \emph{ultrametric space} is a metric space $(X,d)$ such that 
$d(x,y)\leq \max \{d(x,z),d(z,y)\}$
for all $x,y,z\in X$. 
\end{definition}

\begin{definition}\label{end} The \emph{end space} of a rooted
$\mathbb{R}$--tree $(T,v)$ is given by: 
$$end(T,v)=\{F\co [0,\infty) \rightarrow T \ |\ \text{$F(0)=v$ and $F$
is an isometric embedding}\}.$$ 
Let $F, G\in end(T,v)$.
\begin{enumerate}
	\item  The {\it Gromov product at infinity} is $(F|G)_v :=\sup \{t\geq 0 \ |\ F(t)=G(t)\}$.
	\item  The {\it end space metric} is $d_v(F,G) := \e^{-(F|G)_v}$.
	\item The arc $F([0, (F|G)_v])$ is denoted $[F|G]$.
	\item The \emph{bifurcation point} of $F$ and $G$ is $F((F|G)_v)\in T$.
\end{enumerate}
\end{definition}

\begin{prop} If $(T,v)$ is a rooted $\mathbb{R}$--tree,
then $(end(T,v),d_v)$ is a complete ultrametric space of diameter
$\leq 1$.\qed
\end{prop}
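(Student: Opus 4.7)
The plan is to verify four assertions about $(end(T,v), d_v)$: symmetry and positive-definiteness of $d_v$, the ultrametric inequality, the diameter bound $\leq 1$, and completeness.

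First I would establish a structural lemma about the agreement set $A(F,G) := \{t\geq 0 : F(t)=G(t)\}$ for two ends $F,G$. Closedness follows from continuity of $F$ and $G$, and $0\in A(F,G)$ because $F(0)=v=G(0)$. The key observation is that $A(F,G)$ is \emph{downward closed}: if $F(s)=G(s)$ for some $s>0$, then $F|_{[0,s]}$ and $G|_{[0,s]}$ are two isometric parametrizations of an arc in $T$ from $v$ to the common endpoint $F(s)=G(s)$, and by unique arcwise connectivity of the $\br$-tree together with the fact that arcs are canonically parametrized by real intervals, these parametrizations must coincide. Hence $A(F,G)$ is a closed initial segment of $[0,\infty)$, and $(F|G)_v$ is either a maximum or $\infty$ (the latter exactly when $F=G$).

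Given this, the ultrametric inequality drops out: setting $m := \min\{(F|H)_v, (H|G)_v\}$, the maps $F,H,G$ all coincide on $[0,m]$, so $(F|G)_v\geq m$, and exponentiating yields $d_v(F,G)\leq \max\{d_v(F,H), d_v(H,G)\}$. The diameter bound $d_v\leq 1$ is immediate from $(F|G)_v\geq 0$. Symmetry is obvious from the definition, and $d_v(F,G)=0$ forces $(F|G)_v=\infty$, hence $F=G$.

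The main work is completeness. Given a Cauchy sequence $(F_n)$ in $end(T,v)$, for each positive integer $k$ I would pick an index $N_k$ (arranged so $N_1\leq N_2\leq\cdots$) such that $(F_n|F_m)_v \geq k$ whenever $n,m \geq N_k$. By the structural lemma, $F_{N_k}$ and $F_{N_{k+1}}$ agree on $[0,k]$, so the prescription $F(t):=F_{N_k}(t)$ for $t\in[0,k]$ defines, without ambiguity, an isometric embedding $F\co[0,\infty)\to T$ with $F(0)=v$; that is, $F\in end(T,v)$. Then $(F|F_{N_k})_v\geq k$ gives $d_v(F,F_{N_k})\leq \e^{-k}\to 0$, and a Cauchy sequence with a convergent subsequence converges. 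The main obstacle is the structural claim that agreement times form an initial segment; this is the one place where the $\br$-tree hypothesis is genuinely used, and it underlies both the ultrametric inequality and the diagonal construction in the completeness argument. The remaining verifications are routine bookkeeping.
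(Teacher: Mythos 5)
The paper states this proposition without proof (it is marked as standard), so there is nothing to compare against; judged on its own, your argument is correct and complete. The structural lemma---that the agreement set $\{t\geq 0 : F(t)=G(t)\}$ is a closed initial segment of $[0,\infty)$, by uniqueness of arcs in an $\br$-tree and uniqueness of their isometric parametrizations from a fixed endpoint---is exactly the right crux, and it correctly drives both the ultrametric inequality and the gluing step in the completeness proof.
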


\begin{definition}\label{tu}
Let $(U, d)$ be a complete ultrametric space with diameter $\leq 1$. Define an equivalence relation
$\sim$ on $U\times [0,\infty)$ by:
$$(x,t)\sim(y,t')\Longleftrightarrow t=t' \text{ and }  d(x,y)\leq \e^{-t}.$$
Then $T_U := U\times [0,\infty)/\sim$ is the \emph{tree associated to $(U,d)$}.
Define a metric $D$ on $T_U$  by:
\[D([x,t],[y,s])=\left\{
\begin{tabular}{l} $|t-s| \qquad \qquad \qquad \qquad \qquad
\qquad  \mbox{ if }
x=y,$\\

$t+s-2\min\{-\ln(d(x,y)),t,s\} \quad \mbox{ if } x\ne
y.$\end{tabular}
 \right.\]
\end{definition}

A proof of the first item in the following proposition can be
found in \cite[Theorem 6.3]{Hug} and a proof of the second item can be found in \cite[Proposition 6.4]{Hug} and \cite[Proposition 5.5]{M-M}. 

\begin{prop}
\label{prop:assoc tree}
 Let $(U,d)$ be a complete, ultrametric space of diameter $\leq 1$ and let $(T_U, D)$ be its associated tree.
\begin{enumerate}
	\item  
$(T_U,D)$ is a rooted, geodesically complete $\mathbb{R}$--tree with root $v= [x,0]$ for any $x\in U$.
\item\label{isometry} $U$ is isometric to $end(T_U, v)$. \qed
\end{enumerate}
\end{prop}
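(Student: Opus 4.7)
The plan is to establish the two assertions in order: first that $(T_U, D)$ is a rooted, geodesically complete $\br$-tree, and then that the obvious map $U \to end(T_U, v)$ is an isometry. All of the structural content is forced by the ultrametric inequality on $U$, so the strategy is to translate that inequality into the tree-theoretic statements one at a time.

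First I would verify that $\sim$ is an equivalence relation. Symmetry and reflexivity are immediate, and transitivity is precisely the ultrametric inequality: if $d(x,y)\leq \e^{-t}$ and $d(y,z)\leq \e^{-t}$, then $d(x,z)\leq \max\{d(x,y),d(y,z)\}\leq \e^{-t}$. In particular, since $\mathrm{diam}\,U\leq 1$, all pairs $(x,0)$ are equivalent, so $v:=[x,0]$ is independent of $x$. Next I would check that the formula for $D$ is well-defined on equivalence classes (both clauses agree when $[x,t]=[y,s]$, giving $0$) and is a metric. The only nontrivial point is the triangle inequality $D([x,t],[z,r])\leq D([x,t],[y,s])+D([y,s],[z,r])$, which splits into cases according to which of $x,y,z$ coincide; in every case the ultrametric inequality $d(x,z)\leq\max\{d(x,y),d(y,z)\}$ gives $-\ln d(x,z)\geq \min\{-\ln d(x,y),-\ln d(y,z)\}$, and straightforward manipulation with the $\min$ in the formula yields the inequality.

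To see that $(T_U,D)$ is an $\br$-tree, I would introduce the canonical rays $\gamma_x\co [0,\infty)\to T_U$, $\gamma_x(t)=[x,t]$, and observe directly from the definition of $D$ that each $\gamma_x$ is an isometric embedding with $\gamma_x(0)=v$. For points $[x,t]$ and $[y,s]$ with $x\ne y$, set $r=-\ln d(x,y)\geq 0$; then $[x,r]=[y,r]$ is the bifurcation point, and the concatenation of $\gamma_x|_{[r,t]}$ reversed with $\gamma_y|_{[r,s]}$ is an isometric arc of length $t+s-2r$, matching $D$. Uniqueness of this arc follows from the fact that any two descending rays from distinct classes must agree on a maximal initial segment $[0,r']$ with $r'\leq r$, again by the ultrametric inequality. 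Geodesic completeness is then automatic: an isometric embedding $f\co [0,t]\to T_U$ with $f(0)=v$ is forced to coincide with some $\gamma_x$ on $[0,t]$ (pick $x$ to be any representative of $f(t)$), and $\gamma_x$ provides the required extension to $[0,\infty)$.

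Finally, for assertion~(\ref{isometry}), I would define $\Phi\co U\to end(T_U,v)$ by $\Phi(x)=\gamma_x$. Computing the Gromov product,
\[
(\gamma_x|\gamma_y)_v=\sup\{t\geq 0\,:\,[x,t]=[y,t]\}=\sup\{t\geq 0\,:\,d(x,y)\leq \e^{-t}\}=-\ln d(x,y),
\]
so $d_v(\gamma_x,\gamma_y)=\e^{\ln d(x,y)}=d(x,y)$, proving $\Phi$ is an isometric embedding. For surjectivity, given $F\in end(T_U,v)$, pick representatives $F(n)=[x_n,n]$; the fact that $F$ is an isometric embedding forces $[x_n,m]=[x_m,m]$ for $m\leq n$, hence $d(x_n,x_m)\leq \e^{-m}$, so $(x_n)$ is Cauchy in $U$. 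Its limit $x$ satisfies $F=\gamma_x=\Phi(x)$, using completeness of $U$. The main obstacle in all of this is the case analysis for the triangle inequality for $D$; once that is in hand, everything else is essentially bookkeeping driven by the definitions.
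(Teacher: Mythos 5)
Your proof is correct. Note that the paper itself offers no argument for this proposition: it is stated with a \verb|\qed| and the proofs are deferred to the cited references (Theorem 6.3 and Proposition 6.4 of Hughes's earlier paper and Proposition 5.5 of Mart\'inez-P\'erez--Mor\'on), so there is no in-paper proof to compare against; your direct verification is the standard one and is exactly what those references carry out. One small point deserves more care than your parenthetical suggests: well-definedness of $D$ is not just about the two clauses agreeing when $[x,t]=[y,s]$, but about independence of the choice of representatives, and the case split ``$x=y$ versus $x\neq y$'' is itself not representative-independent. The check goes through because if $[x,t]=[x',t]$ then $-\ln d(x,x')\geq t$, and the ultrametric (isosceles-triangle) property then forces $\min\{-\ln d(x,y),t,s\}=\min\{-\ln d(x',y'),t,s\}$ in all cases, collapsing to $|t-s|$ when one pair of representatives happens to coincide. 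Everything else --- the rays $\gamma_x$ as geodesics from the root, the bifurcation point at height $-\ln d(x,y)$, geodesic completeness by extending along $\gamma_x$, the Gromov-product computation giving $d_v(\gamma_x,\gamma_y)=d(x,y)$, and surjectivity via completeness of $U$ --- is accurate and correctly uses each hypothesis (ultrametric for transitivity of $\sim$ and the tree structure, diameter $\leq 1$ for the root, completeness for surjectivity).
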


In this article a \emph{map} is a function that need not be continuous.

\begin{definition}
\label{def:quasi-isometry}
A map $f\co X\to Y$ between metric spaces $(X,d_X)$ and $(Y,d_Y)$ is
a \emph{quasi-isometric map} if there are constants $\lambda
\geq 1$ and $A>0$ such that for all $x,x'\in X$,
$$\frac{1}{\lambda}d_X(x,x') -A \leq d_Y(f(x),f(x'))\leq \lambda
d_X(x,x')+A.$$ If $f(X)$ is a net in $Y$ (i.e., there exists $\epsilon >0$ such that for each $y\in Y$ there exists $x\in X$ such that
$d_Y(f(x), y) <\epsilon$), then $f$ is a
\emph{quasi-isometry} and $X,Y$ are \emph{quasi-isometric}.
\end{definition}

\begin{remark} It is well-known that a quasi-isometry $f\co T\to T'$ between rooted $\br$-trees $(T,v)$ and $(T',w)$ induces a homeomorphism
$\tilde{f}\co end(T,v)\to end(T',w)$. See, for example,
Bridson and Haefliger \cite[Chapter I.8]{BridsonHaefliger}, where they work in the more general setting of
proper, geodesic, metric spaces.

For a quasi-isometry $f\co X\to Y$ between Gromov hyperbolic, almost geodesic metric spaces, Bonk and Schramm 
define \cite[Proposition 6.3]{BoSchr} the induced map $\partial f\co\partial X\to\partial Y$ between the boundaries at infinity
and prove \cite[Theorem 6.5]{BoSchr} that $\partial f$ is  PQ-symmetric (see Definition~\ref{def:PQ-symmetric}  below) 
with respect to any metrics on $\partial X$ and $\partial Y$ 
in their canonical gauges. 
In the special case that $X$ and $Y$ are $\br$-trees, $\partial X= end(X,v)$, $\partial Y=end(Y,w)$ and the end space metrics
are in the canonical gauges for any choice of roots.

Another source for the result that quasi-isometries between $\br$-trees induce PQ-symmetric homeomorphisms on their ultrametric end spaces,
is Buyalo and Schroeder \cite[Theorem 5.2.17]{BS}. They work with Gromov hyperbolic, geodesic metric spaces and with visual boundaries on their
boundaries. When specialized to $\br$-trees, these boundaries are the ultrametric end spaces. 
\end{remark}

\section{Homeomorphisms on metric spaces}
\label{sec:homeo metric}

In this section we begin our discussion of various geometric properties that may be satisfied by homeomorphisms 
between metric spaces. Homeomorphisms between end spaces of rooted, geodesically complete $\br$-trees induced by quasi-isometries of the trees 
are examples where these properties are encountered. With the possible exceptions of homeomorphisms of bounded distortion
and bounded distortion equivalences, defined in 
Definition~\ref{def:qc et al} below, all 
of these concepts are well-known. In addition to Buyalo and Schroeder \cite{BS}, other sources for background include
Bonk and Schramm \cite{BoSchr},
Heinonen \cite{Heinonen},  Roe \cite{Roe1}, Semmes \cite{Semmes}, and 
Tukia and V{\"a}is{\"a}l{\"a} \cite{TukVais}.

\begin{definition}
Let $f\co X\to Y$ be a homeomorphism between metric spaces
$(X,d_X)$ and $(Y,d_Y)$. If 
$x_0\in X$ and $\epsilon >0$, then the {\it distortion by $f$ of
the $\epsilon$-sphere $S(x_0,\epsilon) := \{ x\in X ~|~ d_X(x_0,x) =\epsilon\}$ at $x_0$} is
$$D_f(x_0,\epsilon) := 
\begin{cases}
\frac{\sup\{ d_Y(f(x_0),f(x)) ~|~ d_X(x_0,x)=\epsilon\}}{\inf\{d_Y(f(x_0),f(x)) ~|~ d_X(x_0,x)=\epsilon\}} & \text{if $S(x_0,\epsilon)\not=\emptyset$}\\
\quad\quad\quad\quad\quad\quad 1 & \text{if $S(x_0,\epsilon)=\emptyset$}
\end{cases}$$
\end{definition}

\begin{definition}
\label{def:qc et al}
Let $f\co X\to Y$ be a homeomorphism between  metric spaces. 
\begin{enumerate}
	\item $f$ is {\it conformal} if $\underset{\epsilon\to 0}\limsup D_f(x_0,\epsilon) =1$ for all $x_0\in X$.
		\item $f$ is {\it $K$-quasiconformal}, where $K>0$, if $\underset{\epsilon\to 0}\limsup D_f(x,\epsilon) \leq K$ for all $x\in X$.
			\item $f$ is {\it quasiconformal} if $f$ is $K$-quasiconformal for some $K>0$.
			\item $f$ has {\it bounded distortion} if there exists $K>0$ such that
$$\underset{x\in X}\sup\, \underset{\epsilon> 0}\sup\, D_f(x,\epsilon) \leq K.$$
\item $f$ is a {\it bounded distortion equivalence} if $f$ and $f^{-1}\co Y\to X$ have bounded distortion.
\end{enumerate}
\end{definition}

Quasiconformal maps are those maps with control on the distortion of sufficiently small spheres. 
Homeomorphisms of bounded distortion have control on the distortion of every sphere.
An example of a quasiconformal (in fact, conformal) homeomorphism that is not of bounded distortion is provided by Example~\ref{quasi-conf}
below.

\begin{definition}
\label{def:quasi-symmetric} A map $f\co X \to Y$ between metric spaces 
$(X,d_X)$ and $(Y,d_Y)$ is
\emph{quasi-symmetric} if $f$ is not constant and if there
is a homeomorphism $\eta\co [0,\infty) \to [0,\infty)$, called the {\it control function for $f$}, such that whenever
$x, a, b\in X$, $t\geq 0$, and
$d_X(x,a)\leq t\,d_X(x,b)$ it follows that $d_Y(f(x),f(a))\leq
\eta(t)\,d_Y(f(x),f(b))$. 
\end{definition}

Thus, a quasi-symmetric map controls distortion of annuli; more precisely, it controls the distortion of ratios between inner and outer radii
of annuli. 

\begin{definition}
\label{def:PQ-symmetric} 
A quasi-symmetric map is said to be \emph{power quasi-symmetric},
or \emph{PQ-symmetric}, if it has a control function  of the form
\[\eta(t)= q \max\{t^p, t^{1/p} \}\] for some $p,q\geq 1$.
\end{definition}

\begin{remark}
\label{rem:TV inverses}
It follows from Tukia and V{\"a}is{\"a}l{\"a} \cite[Theorem 2.2]{TukVais} that inverses and compositions of quasi-symmetric homeomorphisms
are quasi-symmetric; moreover, inverses and compositions of PQ-symmetric homeomorphisms
are PQ-symmetric.
\end{remark}

\begin{prop}
\label{prop:QS implies BQE} 
If a homeomorphism $f\co X \to Y$ between metric spaces
$(X,d_X)$ and $(Y,d_Y)$
is quasi-symmetric, then $f$ 
is a bounded distortion equivalence.
\end{prop}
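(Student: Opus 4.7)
The plan is to show directly from the definitions that $D_f(x_0,\epsilon)\leq \eta(1)$ for every $x_0\in X$ and every $\epsilon>0$, where $\eta$ is a control function for $f$. This single inequality immediately yields bounded distortion with constant $K=\eta(1)$, and applying the same reasoning to $f^{-1}$ (which is quasi-symmetric by Remark~\ref{rem:TV inverses}) then gives a bounded distortion equivalence.

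Fix $x_0\in X$ and $\epsilon>0$. If $S(x_0,\epsilon)=\emptyset$, then $D_f(x_0,\epsilon)=1$ by convention. Otherwise, for any two points $a,b\in S(x_0,\epsilon)$ we have
\[
d_X(x_0,a)=\epsilon = 1\cdot d_X(x_0,b),
\]
so the quasi-symmetric condition with $t=1$ gives $d_Y(f(x_0),f(a))\leq \eta(1)\, d_Y(f(x_0),f(b))$. Holding $b$ fixed and taking the supremum over $a\in S(x_0,\epsilon)$, then taking the infimum over $b\in S(x_0,\epsilon)$, yields
\[
\sup_{a\in S(x_0,\epsilon)} d_Y(f(x_0),f(a)) \ \leq\ \eta(1)\cdot \inf_{b\in S(x_0,\epsilon)} d_Y(f(x_0),f(b)).
\]
Since $f$ is injective and no point of $S(x_0,\epsilon)$ equals $x_0$, the supremum on the left is strictly positive; hence the infimum on the right is strictly positive, and dividing produces $D_f(x_0,\epsilon)\leq \eta(1)$.

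Taking the supremum over all $x_0$ and $\epsilon$ shows $f$ has bounded distortion with $K=\eta(1)$. Applying the same argument to the quasi-symmetric homeomorphism $f^{-1}\co Y\to X$ (whose control function exists by Remark~\ref{rem:TV inverses}) shows $f^{-1}$ also has bounded distortion, completing the proof.

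Honestly, there is no real obstacle here: the proof is essentially a one-line observation. The key conceptual point is simply that evaluating the quasi-symmetric inequality at the single parameter value $t=1$ is already enough to bound the ratio of outer to inner diameters of the image of every metric sphere, uniformly in both the center and the radius. The conclusion is essentially a triviality relative to the quasi-symmetric hypothesis, which is why the substantive content of Theorem~\ref{thm:main} lies in the reverse direction.
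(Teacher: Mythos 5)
Your proof is correct and follows essentially the same route as the paper: apply the quasi-symmetric inequality with $t=1$ to pairs of points on the same sphere to get $D_f(x_0,\epsilon)\leq\eta(1)$, then invoke Remark~\ref{rem:TV inverses} to handle $f^{-1}$. The extra care you take with the sup/inf and positivity is a harmless elaboration of the paper's one-line argument.
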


\begin{proof} 
Let $\eta$ be a control function for $f$ and let $x\in X$ and $\epsilon >0$ be given. 
If $a,b \in X$ with $d_X(x,a)= \epsilon =d_X(x,b)$, then
$d_Y(f(x),f(a))\leq \eta(1)\, d_Y(f(x),f(b))$.
Hence, $D_f(x,\epsilon)\leq \eta(1)$ and $f$ has bounded distortion with constant $K=\eta(1)$.
It follows from Remark~\ref{rem:TV inverses} that $f^{-1}$ is also quasi-symmetric (hence, of bounded distortion)
and $f$ is a bounded distortion equivalence.
\end{proof}

\begin{definition}
\label{def:bi-holder}
A homeomorphism $f\co X \to Y$ between metric spaces
$(X,d_X)$ and $(Y,d_Y)$
is
{\it bi-H\"older} if there exists constants $\alpha > 0$ and $c>0$ such that 
$$\frac{1}{c} d_X(x,y)^{1/\alpha} \leq d_Y(f(x),f(y)) \leq c d_X(x,y)^\alpha$$
for all $x, y\in X$. If $\alpha=1$, then $f$ is {\it bi-Lipschitz}.
\end{definition}

\begin{remark}
\label{remark:PQ biH}
Note that a bi-Lipschitz homeomorphism is PQ-symmetric with $p=1$, $q=c^2$, and $\eta(t) = qt$.
It follows from the proof of Tukia and V{\"a}is{\"a}l{\"a} \cite[Theorem 3.14]{TukVais}
that a PQ-symmetric homeomorphism $f\co X\to Y$ between bounded metric spaces is bi-H\"older.
A quasiconformal homeomorphism need not be bi-H\"older as shown in Example~\ref{ex:qc not biholder}
below.
\end{remark} 

\begin{remark}
For homeomorphisms between arbitrary bounded metric spaces, there are the following implications:
\begin{small}$$
\begin{diagram}
\node{\text{PQ-symmetric}} \arrow{s} \arrow{e} \node{\text{quasi-symmetric}} \arrow{e} \node{\text{bounded distortion equivalence}}\arrow{s} \\
\node{\text{bi-H\"older}} \arrow{e,!} \node{\text{conformal}} \arrow{e} \node{\text{quasiconformal}}
\end{diagram}
$$\end{small}
In fact, the implication that PQ-symmetric homeomorphisms are bi-H\"older is the only one that requires that the metric spaces be bounded.
No other implications hold in general.
\end{remark} 
\section{Homeomorphisms on ultrametric spaces}
\label{sec:homeo ultrametric}

In this section, the Gromov product at infinity is used
to characterize homeomorphisms  of bounded distortion, quasi-symmetries,  and PQ-symmetries
in the case of interest to us, namely,
maps between end spaces of trees. 
It is also shown that local similarity equivalences between compact, ultrametric spaces are PQ-symmetric.

We begin with the characterization of bounded distortion homeomorphisms in terms of the Gromov product at infinity.

\begin{prop}
\label{prop:bqc gpi} If $(T,v)$ and $(T',w)$ are  rooted, geodesically complete
$\mathbb{R}$--trees, then a homeomorphism $h\co  end(T,v) \to end(T',w)$ has bounded distortion
if and only if there exists a constant $A\geq 0$ such
that whenever $F,G,H \in end(T,v)$ and
$(F|G)_v=(F|H)_v$ it follows that $|(h(F)|h(G))_w-(h(F)|h(H))_w|\leq A$.
\end{prop}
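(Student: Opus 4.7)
The plan is that this is essentially a clean logarithmic translation between the two conditions, using the defining formula $d_v(F,G)=\e^{-(F|G)_v}$ of the end-space metric. The key observation is that the sphere $S(F,\epsilon)\subseteq end(T,v)$ consists exactly of those $G$ with $(F|G)_v=-\ln\epsilon$, so \emph{equal distance from $F$} and \emph{equal Gromov product with $F$} describe the same sets.

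First I would spell out this dictionary: for any $F,G,H\in end(T,v)$ with $F\notin\{G,H\}$,
\[
d_v(F,G)=d_v(F,H) \iff (F|G)_v=(F|H)_v,
\]
and, on the target side, $d_w(h(F),h(G))/d_w(h(F),h(H)) = \e^{(h(F)|h(H))_w-(h(F)|h(G))_w}$.

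For the forward direction, suppose $h$ has bounded distortion with constant $K$, and let $F,G,H$ satisfy $(F|G)_v=(F|H)_v=:t$. If $F=G=H$ there is nothing to prove; otherwise $G,H\in S(F,\e^{-t})$ and the hypothesis $D_h(F,\e^{-t})\leq K$ gives
\[
\frac{d_w(h(F),h(G))}{d_w(h(F),h(H))}\leq K,
\]
which, after taking $-\ln$, reads $(h(F)|h(H))_w-(h(F)|h(G))_w\leq\ln K$. Swapping $G$ and $H$ yields $|(h(F)|h(G))_w-(h(F)|h(H))_w|\leq\ln K$, so the Gromov-product condition holds with $A=\ln K$.

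For the converse, assume the Gromov-product condition holds with constant $A$. Fix $F\in end(T,v)$ and $\epsilon>0$; if $S(F,\epsilon)=\emptyset$ then $D_h(F,\epsilon)=1$ by definition, so suppose $S(F,\epsilon)\neq\emptyset$. Every $G\in S(F,\epsilon)$ has $(F|G)_v=-\ln\epsilon$, hence any two $G,H\in S(F,\epsilon)$ satisfy the hypothesis, so $|(h(F)|h(G))_w-(h(F)|h(H))_w|\leq A$, i.e.\ $d_w(h(F),h(G))\leq \e^A d_w(h(F),h(H))$. Taking the supremum over $G\in S(F,\epsilon)$ and infimum over $H\in S(F,\epsilon)$ gives $D_h(F,\epsilon)\leq \e^A$. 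Since this bound is uniform in $F$ and $\epsilon$, $h$ has bounded distortion with constant $K=\e^A$.

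There is no substantial obstacle here; the only mild care needed is the bookkeeping around the cases $S(F,\epsilon)=\emptyset$ and $F\in\{G,H\}$ (both trivial), and handing the sup/inf in the definition of $D_h$ as $G,H$ range over the sphere rather than comparing two specific points. Both are immediate consequences of the uniform inequality obtained pointwise.
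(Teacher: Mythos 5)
Your proof is correct and follows essentially the same route as the paper's: both translate the sphere condition $d_v(F,G)=d_v(F,H)=\epsilon$ into $(F|G)_v=(F|H)_v=-\ln\epsilon$, rewrite the distortion ratio as $\e^{(h(F)|h(H))_w-(h(F)|h(G))_w}$, and conclude with the correspondence $A=\ln K$. The only (harmless) difference is that you separate the two implications and explicitly handle the degenerate cases, whereas the paper phrases the whole argument as a chain of equivalences.
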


\begin{proof} 
Using the relation between the end space metric and the Gromov product at infinity, it follows that 
for $F\in end(T,v)$, $\epsilon >0$, and $K\geq 1$, we have
$D_h(F,\epsilon)\leq K$ if and only if
$$\frac{\e^{-(h(F)|h(G))_w}}{\e^{-(h(F)|h(H))_w}} \leq K,$$
whenever
$G, H \in end(T,v)$ and $(F|G)_v =(F|H)_v = -\ln\epsilon$.
Thus, if 
$F\in end(T,v)$ and $K\geq 1$, then
$\sup_{\epsilon >0} D_h(F,\epsilon)\leq K$ 
if and only if
$$\e^{(h(F)|h(H))_w -(h(F)|h(G))_w} \leq K,$$
whenever 
$G, H \in end(T,v)$ and $(F|G)_v= (F|H)_v $;
in turn, this holds
if and only if 
$$\vert(h(F)|h(H))_w -(h(F)|h(G))_w\vert \leq\ln K,$$
whenever 
$G, H \in end(T,v)$ and $(F|G)_v= (F|H)_v $.
The result follows; furthermore, the relationship between the bounded distortion
constant $K$ and the constant $A$ is given by $A=\ln K$.
\end{proof}

The following result gives the characterization of quasi-symmetric maps between end spaces of trees in terms
of the Gromov product at infinity. 

\begin{prop}
\label{prop:qs gpi} If $(T,v)$ and $(T',w)$ are  rooted, geodesically complete
$\mathbb{R}$--trees, then a map $f\co  end(T,v) \to end(T',w)$
is quasi-symmetric
if and only if there exists an orientation-preserving homeomorphism $\gamma\co\br\to \br$ such
that whenever $F,G,H\in end(T,v)$,
it follows that  
$$-\gamma((F|G)_v-(F|H)_v)  \leq (F'|H')_w-(F'|G')_w\leq
\gamma((F|H)_v-(F|G)_v),$$
where $F'=f(F),G'=f(G)$, and $H'=f(H)$.
\end{prop}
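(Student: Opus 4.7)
The plan is to exploit the identity $(F|G)_v = -\ln d_v(F,G)$, which converts the multiplicative quasi-symmetric condition on end space metrics into an additive condition on Gromov products at infinity. The core correspondence between control functions is $\gamma(s) := \ln \eta(\e^s)$ (and inversely, $\eta(t) := \e^{\gamma(\ln t)}$ for $t>0$, extended by $\eta(0):=0$). Since an orientation-preserving homeomorphism $\eta\co [0,\infty)\to[0,\infty)$ must satisfy $\eta(0)=0$ and $\eta(t)\to\infty$ as $t\to\infty$, these substitutions send orientation-preserving homeomorphisms of $[0,\infty)$ to orientation-preserving homeomorphisms of $\br$ and vice versa.

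For the forward direction, suppose $f$ is quasi-symmetric with control function $\eta$ and define $\gamma$ as above. Given $F,G,H\in end(T,v)$ (the degenerate cases where two of them coincide are handled by the homeomorphism property of $f$), set $t=d_v(F,G)/d_v(F,H)$ and apply quasi-symmetry to obtain $d_w(F',G')\leq\eta(t)d_w(F',H')$. Taking $-\ln$ on both sides and using $(F|G)_v=-\ln d_v(F,G)$ yields
\[
(F'|H')_w-(F'|G')_w\leq\gamma\bigl((F|H)_v-(F|G)_v\bigr).
\]
The lower bound of the stated inequality is obtained by swapping the roles of $G$ and $H$ and applying the same argument, which gives $(F'|G')_w-(F'|H')_w\leq\gamma((F|G)_v-(F|H)_v)$, i.e.\ $(F'|H')_w-(F'|G')_w\geq -\gamma((F|G)_v-(F|H)_v)$.

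For the reverse direction, given $\gamma$ define $\eta$ as above and verify it is an orientation-preserving homeomorphism of $[0,\infty)$ (continuity at $0$ follows because $\gamma(s)\to-\infty$ as $s\to-\infty$). Then given $F,G,H\in end(T,v)$ and $t\geq 0$ with $d_v(F,G)\leq t\,d_v(F,H)$, the monotonicity of $\eta$ lets one reduce to $t=d_v(F,G)/d_v(F,H)$; the upper bound of the hypothesis, after exponentiating, delivers $d_w(F',G')\leq\eta(t)d_w(F',H')$, establishing quasi-symmetry.

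The argument is essentially a bookkeeping translation via logarithms, so there is no deep obstacle. The only points requiring care are (i) confirming that the correspondence $\eta\leftrightarrow\gamma$ preserves the homeomorphism and orientation properties on the correct domains, and (ii) handling the degenerate triples where two of $F,G,H$ coincide, which is trivial since $f$ is a homeomorphism between the end spaces.
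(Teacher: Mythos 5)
Your proof is correct and follows essentially the same route as the paper: both rest on the substitution $\gamma(s)=\ln\eta(\e^s)$ together with the identity $(F|G)_v=-\ln d_v(F,G)$. The only cosmetic difference is that the paper invokes the two-sided Tukia--V\"ais\"al\"a reformulation of quasi-symmetry, $\eta(\rho^{-1})^{-1}\leq d_w(F',G')/d_w(F',H')\leq\eta(\rho)$, whereas you derive the lower bound directly by swapping the roles of $G$ and $H$ --- which is the same content.
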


\begin{proof} 
According to Tukia and V{\"a}is{\"a}l{\"a} \cite[page 99]{TukVais}, $f$ is quasi-symmetric if and only if there exists a homeomorphism
$\eta\co [0,\infty)\to[0,\infty)$ such that
$$\eta(\rho^{-1})^{-1}\leq \frac{d_w(F',G')}{d_w(F',H')} \leq \eta(\rho)$$
whenever $F,G,H\in end(T,v)$ and $\rho =\frac{d_v(F,G)}{d_v(F,H)}$.
The result now follows easily via the correspondence of $\eta$ and $\gamma$ implied by the relation
$\gamma(t) =\ln\eta(\e^t)$ and
the relation between the end space metric and the Gromov product at infinity.
\end{proof}

The following result gives the characterization of PQ-symmetric maps between end spaces of trees in terms
of the Gromov product at infinity. 

\begin{prop}
\label{prop:pqs gpi} If $(T,v)$ and $(T',w)$ are  rooted, geodesically complete
$\mathbb{R}$--trees, then a map $f\co  end(T,v) \to end(T',w)$
is PQ-symmetric
if and only if there exist constants $\lambda\geq 1$ and $A\geq 0$ such
that whenever $F,G,H\in end(T,v)$ and $(F|G)_v\leq (F|H)_v< \infty$,
it follows that  
\[\frac{1}{\lambda} ((F|H)_v-(F|G)_v)-A \leq (F'|H')_w-(F'|G')_w\leq
\lambda ((F|H)_v-(F|G)_v)+A,\]
where $F'=f(F),G'=f(G)$ and $H'=f(H)$.
\end{prop}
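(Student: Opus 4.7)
My plan is to derive this from Proposition~\ref{prop:qs gpi} by using the PQ-symmetric form of the control function $\eta(t)=q\max\{t^p,t^{1/p}\}$ and passing through the substitution $\gamma(t)=\ln\eta(\e^t)$, as was done in the proof of Proposition~\ref{prop:qs gpi}. This substitution converts the PQ-form into $\gamma(t)=\ln q+\max\{pt,t/p\}$, which is piecewise linear with slope $p$ on $[0,\infty)$ and slope $1/p$ on $(-\infty,0]$. The whole argument is just the bookkeeping needed to verify that this particular $\gamma$ corresponds exactly to the asymmetric pair of linear bounds in the statement.

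For the forward direction, I assume $f$ is PQ-symmetric with $\eta(t)=q\max\{t^p,t^{1/p}\}$, $p,q\geq 1$, so by Proposition~\ref{prop:qs gpi} the associated $\gamma$ above controls the Gromov products. Given $F,G,H\in end(T,v)$ with $(F|G)_v\leq (F|H)_v<\infty$, I set $s:=(F|H)_v-(F|G)_v\geq 0$. Then Proposition~\ref{prop:qs gpi} yields
\[-\gamma(-s)\leq (F'|H')_w-(F'|G')_w\leq\gamma(s).\]
Since $s\geq 0$ and $p\geq 1$, $\gamma(s)=\ln q+ps$ and $-\gamma(-s)=-\ln q+s/p$, so taking $\lambda:=p$ and $A:=\ln q$ gives exactly the required double inequality.

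For the converse, the idea is to first bootstrap the inequality into a symmetric statement valid for all $s\in\br$. If $(F|H)_v<(F|G)_v$, I simply swap the roles of $G$ and $H$ in the hypothesis: the two-sided bound with $s=(F|H)_v-(F|G)_v<0$ becomes $\lambda s-A\leq (F'|H')_w-(F'|G')_w\leq (s/\lambda)+A$. Combined with the original range $s\geq 0$, the two cases package together as
\[\min\{\lambda s,s/\lambda\}-A\ \leq\ (F'|H')_w-(F'|G')_w\ \leq\ \max\{\lambda s,s/\lambda\}+A\qquad\text{for all }s\in\br.\]
Now I define $\gamma\co\br\to\br$ by $\gamma(t):=\max\{\lambda t,t/\lambda\}+A$. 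This is continuous, strictly increasing, and surjective onto $\br$ (it is piecewise linear through the origin with positive slopes $\lambda$ and $1/\lambda$), hence an orientation-preserving homeomorphism of $\br$. A direct computation gives $-\gamma(-t)=\min\{\lambda t,t/\lambda\}-A$, so the displayed inequality above is precisely the hypothesis of Proposition~\ref{prop:qs gpi} with this $\gamma$. Thus $f$ is quasi-symmetric, and inverting the substitution yields $\eta(t)=\e^{\gamma(\ln t)}=\e^A\max\{t^\lambda,t^{1/\lambda}\}$, which is of the PQ-form with $p=\lambda$ and $q=\e^A$.

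There is no real obstacle here beyond careful sign-tracking when $s<0$; the only thing to watch is that $\gamma$ must be an orientation-preserving homeomorphism of all of $\br$ (not just a function on $[0,\infty)$), which is why I need the symmetric two-sided bound before defining $\gamma$. The relationship between PQ-constants and the constants in the statement is $(\lambda,A)\leftrightarrow (p,\ln q)$.
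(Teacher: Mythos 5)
Your argument is correct, and it reaches the same constants $(\lambda,A)=(p,\ln q)$ as the paper, but it gets there by a different route. The paper proves Proposition~\ref{prop:pqs gpi} directly from Definition~\ref{def:PQ-symmetric}: it translates the two metric inequalities $d_w(F',G')\leq qt^p d_w(F',H')$ and $d_w(F',H')\leq q(1/t)^{1/p}d_w(F',G')$ into Gromov-product form for the forward direction, and for the converse it verifies the quasi-symmetry condition by an explicit case analysis on $t\geq 1$ versus $t\leq 1$. You instead factor everything through Proposition~\ref{prop:qs gpi}, observing that the PQ-form $\eta(t)=q\max\{t^p,t^{1/p}\}$ corresponds under $\gamma(t)=\ln\eta(\e^t)$ exactly to the piecewise-linear $\gamma(t)=\ln q+\max\{pt,t/p\}$, and that the stated asymmetric pair of linear bounds is precisely the condition $-\gamma(-s)\leq\cdot\leq\gamma(s)$ for that $\gamma$. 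Your approach is shorter and more conceptual (it exhibits the PQ case as the ``linear control function'' case of the general quasi-symmetric characterization), at the mild cost of relying on the explicit $\eta\leftrightarrow\gamma$ correspondence established in the \emph{proof} of Proposition~\ref{prop:qs gpi} rather than only its statement --- which is legitimate since you say so, and since that correspondence is exactly what the paper's proof of that proposition records. Your sign-tracking in the converse (swapping $G$ and $H$ to get the bound for $s<0$, then packaging both cases as $\min\{\lambda s,s/\lambda\}-A\leq\cdot\leq\max\{\lambda s,s/\lambda\}+A$) is correct. One trivial slip: $\gamma(t)=\max\{\lambda t,t/\lambda\}+A$ does not pass through the origin when $A>0$ (it satisfies $\gamma(0)=A$); this does not affect the claim that $\gamma$ is an orientation-preserving homeomorphism of $\br$, which is all you need.
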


\begin{proof} 
Suppose first that $f$ is PQ-symmetric with constants $p,q\geq 1$
as in Definition~\ref{def:PQ-symmetric} and let
$\lambda=p$ and $A=\ln q$.
If $F,G,H\in end(T,v)$ and $(F|G)_v\leq (F|H)_v< \infty$, then it follows from
the relation between the end space metric and the Gromov product at infinity  that
$$\frac{d_v(F,G)}{d_v(F,H)} = \e^{(F|H)_v-(F|G)_v}\geq 1.$$
Thus,  $d_v(F,G)=t\cdot d_v(F,H)$, where $t\geq 1$ and
$\ln t = (F|H)_v-(F|G)_v$. 
The PQ-symmetric property implies  
$$d_w(F',G')\leq qt^p d_w(F',H')$$ and
$$d_w(F',H')\leq q(1/t)^{1/p}d_w(F',G').$$ 
These two inequalities are equivalent to  
\[(F'|G')_w\geq
(F'|H')_w-\lambda ((F|H)_v-(F|G)_v)-A\] and
\[(F'|H')_w\geq (F'|G')_w+\frac{1}{\lambda} ((F|H)_v-(F|G)_v)-A,\] 
which are in turn equivalent to the desired inequalities.

Conversely, given $\lambda\geq 1$ and $A\geq 0$ satisfying the given conditions,
let $p=\lambda$ and $q=\e^A$.
Suppose $F, G, H\in end(T,v)$, $t\geq 0$, and $d_v(F,G)\leq t\, d_v(F,H)$. It must be shown that
$$d_w(F', G')\leq\begin{cases} 
qt^p\, d_w(F', H') & \text{if $t\geq 1$}\\
qt^{1/p}\, d_w(F', H') & \text{if $t\leq 1$}
\end{cases}$$
(where $F', G', H'$ continue to denote the images of $F, G, H$, respectively, under $f$).
We may assume $0 < d_v(F,G)$ and $t>0$, for otherwise the result is trivial.

We first consider the case $(F|G)_v\leq (F|H)_v$. In particular, $d_v(F,H)\leq d_v(F,G)$ and $t\geq 1$.
The right-hand part of the assumed inequalities takes the form
$$\ln\frac{d_w(F',G')}{d_w(F',H')} \leq p\ln\frac{d_v(F,G)}{d_v(F,H)}+\ln q =\ln\left[q\left(\frac{d_v(F,G)}{d_v(F,H)}\right)^p\right].$$
Therefore, 
$$\frac{d_w(F',G')}{d_w(F',H')} \leq q\left(\frac{d_v(F,G)}{d_v(F,H)}\right)^p \leq qt^p,$$
as required.

Finally, consider the case $(F|H)_v\leq (F|G)_v$.
The left-hand part of the assumed inequalities takes the form
$$\ln\left[\frac{1}{q}\left(\frac{d_v(F,H)}{d_v(F,G)}\right)^{1/p}\right]   =\frac{1}{p}\ln\frac{d_v(F,H)}{d_v(F,G)}-\ln q \leq \ln\frac{d_w(F',H')}{d_w(F',G')}.$$
Therefore,
$$\frac{1}{q}\left(\frac{1}{t}\right)^{1/p} \leq   \frac{1}{q}\left(\frac{d_v(F,H)}{d_v(F,G)}\right)^{1/p} \leq \frac{d_w(F',H')}{d_w(F',G')}$$
and 
$$d_w(F',G')\leq q t^{1/p}\, d_w(F',H').$$
This is exactly what is required if $t\leq 1$;
if $t\geq 1$, what is required follows by using $t^{1/p}\leq t^p$.
\end{proof}

\begin{remark}
\label{remark:gpi biH}
The following characterization of bi-H\"olderness in terms of the Gromov product at infinity is straightforward to verify.
If $(T,v)$ and $(T',w)$ are  rooted, geodesically complete
$\mathbb{R}$--trees, then a homeomorphism $f\co  end(T,v) \to end(T',w)$
is bi-H\"older 
if and only if there exist constants $\lambda\geq 1$ and $A\geq 0$ such
that whenever $F,G\in end(T,v)$,
it follows that  
$$\frac{1}{\lambda} (F|G)_v-A \leq (F'|G')_w\leq
\lambda (F|G)_v+A,$$
where $F'=f(F)$ and $G'=f(G)$.
\end{remark}

\begin{remark}
\label{remark:rooted homeo}
Suppose $(T,v)$ and $(T',w)$ are rooted, geodesically complete, $\br$-trees and 
$h\co end(T,v)\to end(T',w)$ is a homeomorphism induced by a rooted homeomorphism
$\hat{h}\co (T,v)\to (T',w)$; i.e., $h(F)=\hat{h}\circ F$ for all $F\in end(T,v)$.
It follows that if $F, G, H\in end(T,v)$ and $(F|G)_v=(F|H)_v$, then
$(h(F)|h(G))_w=(h(F)|h(H))_w$.
Thus, $h$ is conformal and a bounded distortion equivalence (with constant $K=1$).
\end{remark}

Finally, we show that the local similarity equivalences between compact ultrametric spaces studied in \cite{Hug, HugTUNG} are
PQ-symmetric. In fact, we show that they are bi-Lipschitz. In particular, this affirms a conjecture of Mirani \cite{Mirani2006, Mirani2008}.

\begin{definition} A function $f\co X \to Y$ between metric spaces
$(X,d_X)$, $(Y,d_Y)$ is a \emph{similarity} if there exists
$\lambda>0$ such that $d_Y(f(x),f(y))=\lambda d_X(x,y)$ for all
$x,y \in X$. In this case,  $f$ is a \emph{$\lambda$-similarity}. 
\end{definition}

\begin{definition} A homeomorphism $h\co X \to Y$ between metric spaces is a
\emph{local similarity equivalence} if for every $x\in X$ there
exist $\varepsilon>0$ and $\lambda>0$ such that the restriction
$h|_{B(x,\varepsilon)}\co  B(x,\varepsilon) \to B(h(x),\lambda
\varepsilon)$ is a surjective $\lambda$-similarity.
\end{definition}

\begin{prop}
\label{thm:LSE} If $f : U \to V$ is a local similarity equivalence between
compact, ultrametric spaces, then $f$ is bi-Lipschitz.
In particular, $f$ is PQ-symmetric.
\end{prop}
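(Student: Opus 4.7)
The plan is to use compactness of $U$ to reduce the uncountably many local similarity constants to a finite collection, and then to bootstrap a global bi-Lipschitz bound by handling close pairs via a Lebesgue-number argument and far-apart pairs via an extreme-value argument.

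First, for each $x \in U$ let $\varepsilon_x, \lambda_x > 0$ be constants witnessing the local similarity equivalence at $x$, so that $f|_{B(x,\varepsilon_x)}$ is a surjective $\lambda_x$-similarity onto $B(f(x),\lambda_x\varepsilon_x)$. Compactness of $U$ yields a finite subcover $\{B(x_i,\varepsilon_{x_i})\}_{i=1}^n$; put $\Lambda = \max_i \lambda_{x_i}$ and $\mu = \min_i \lambda_{x_i}$. A Lebesgue number $\delta > 0$ for this cover has the property that every pair $\{x,y\} \subset U$ with $d_U(x,y) < \delta$ lies inside some $B(x_i,\varepsilon_{x_i})$, whence $d_V(f(x),f(y)) = \lambda_{x_i}\, d_U(x,y)$. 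Thus $\mu\, d_U(x,y) \leq d_V(f(x),f(y)) \leq \Lambda\, d_U(x,y)$ for all such close pairs.

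For pairs at distance at least $\delta$, consider the compact set $K = \{(x,y) \in U \times U : d_U(x,y) \geq \delta\}$. Since $f$ is a homeomorphism, the ratio $g(x,y) := d_V(f(x),f(y))/d_U(x,y)$ is continuous on $K$ and strictly positive there (injectivity of $f$ together with the lower bound $d_U(x,y) \geq \delta > 0$), so $g$ attains a positive minimum $m$ and a finite maximum $M$ on $K$. Setting $c = \min(\mu,m)$ and $C = \max(\Lambda,M)$, the two regimes combine to give $c\, d_U(x,y) \leq d_V(f(x),f(y)) \leq C\, d_U(x,y)$ for all $x,y \in U$, so $f$ is bi-Lipschitz and therefore PQ-symmetric by Remark~\ref{remark:PQ biH}.

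The main obstacle, such as it is, is the coordination of the close-pair and far-pair regimes, which is precisely what the Lebesgue number accomplishes; otherwise the argument is routine. It is worth noting that neither the ultrametric hypothesis nor the compactness of $V$ is actually used in this proof---the result holds for local similarity equivalences between any compact metric space and any metric space target.
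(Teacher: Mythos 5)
Your proof is correct and shares the basic skeleton of the paper's argument --- a finite subcover by balls on which $f$ is a similarity, a Lebesgue number to handle close pairs, and a separate treatment of far pairs --- but the packaging is genuinely different. The paper first normalizes diameters and uses Proposition~\ref{prop:assoc tree} to realize $U$ and $V$ as end spaces of rooted trees, then proves the equivalent statement that $|(F|G)_v-(h(F)|h(G))_w|$ is uniformly bounded; for far pairs it introduces a second Lebesgue number for the image cover $\{B(h(x_i),\lambda_i\varepsilon_i)\}$ and bounds both Gromov products from above, producing an explicit constant $K$. You work directly with the metrics and dispatch far pairs by an extreme-value argument for the continuous, positive ratio $d_V(f(x),f(y))/d_U(x,y)$ on the compact set $\{(x,y): d_U(x,y)\geq\delta\}$. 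This is softer (no explicit bi-Lipschitz constant) but needs only one Lebesgue number and, as you correctly observe, never uses the ultrametric hypothesis, so it in fact proves the more general statement that any local similarity equivalence defined on a compact metric space is bi-Lipschitz. Two pedantic remarks: the set of far pairs may be empty, in which case the close-pair estimate already suffices; and since $f$ is a homeomorphism, compactness of $V$ is automatic, so it is not really an ``extra'' hypothesis being dropped.
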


\begin{proof}
Up to similarity homeomorphism, we may assume that the diameters of $U$ and $V$ are $\leq 1$.
Proposition~\ref{prop:assoc tree} then implies that $U= end(T,v)$ and $V= end(T',w)$, where
$(T,v)$ and $(T',w)$ are rooted, geodesically
complete $\mathbb{R}$--trees.
There is a local similarity equivalence $h:end(T,v)\to end(T',w)$
induced by conjugating $f$ by similarities. Clearly, it suffices to 
show that $h$ is bi-Lipschitz. To this end, we will show that 
there exists some constant $K>0$ such that if $F,G \in
end(T,v)$, then $|(F|G)_v-(h(F)|h(G))_w|\leq K$ .
If there is such a constant $K$, then it is clear that: if $F,G
\in end(T,v)$, then $ (F|G)_v-K \leq
(h(F)|h(G))_w \leq(F|G)_v+K$ 
and
$e^{-K}d(F,G) \leq d(h(F),h(G)) \leq e^K d(F,G)$. That is, $h$
is bi-Lipschitz. 

To complete the proof, we will establish the existence of $K$.
For every $F\in end(T,v)$ there exist $\varepsilon>0$ and
$\lambda>0$ such that the restriction $h|_{B(x,\varepsilon)}:
B(x,\varepsilon) \to B(h(x),\lambda \varepsilon)$ is a surjective
$\lambda$-similarity. Since $end(T,v)$ is compact, there is a
finite, open covering $\{B(x_i,\varepsilon_i)\}_{i=1}^n$ and associated similarity constants
$\{\lambda_i\}_{i=1}^n$. Since $h$ is a homeomorphism, $\{B(h(x_i),\lambda_i \varepsilon_i))\}_{i=1}^n$ is also an
open covering of $end(T',w)$.
Let $\delta_1$ a Lebesgue number for the covering
$\{B(x_i,\varepsilon_i)\}_{i=1}^n$ and $\delta_2$ a Lebesgue number for the
covering $\{B(h(x_i),\lambda_ i \varepsilon_i)\}_{i=1}^n$. Define 
$$K:=
\max\{\max\{|\ln (\lambda_i)| ~|~ i=1,\dots,n \}, -2\ln(\delta_1),-2\ln(\delta_2)\}.$$
If  $F,G\in B(x_i,\varepsilon_i)$ for some $i=1,\dots, n$, then
$d(h(F),h(G))=\lambda_i d(F,G)$ and $|(F|G)_v-(h(F)|h(G))_w|=
|\ln(\lambda_i)| \leq K$, as desired.
On the other hand, if $F,G$ are not in any such ball, then 
$h(F)$ and $h(G)$ are not in the same ball
$B(h(x_i),\lambda_i \varepsilon_i))$ for any $i$; therefore,
$d(h(F),h(G))>\delta_2$. Thus, $|(F|G)_v-(h(F)|h(G))_w|\leq
(F|G)_v+(h(F)|h(G))_w\leq -\ln(\delta_1)- \ln(\delta_2)\leq K$, which
establishes the desired property of $K$.
\end{proof}

\begin{remark} An alternative proof that a local similarity equivalence between compact, ultrametric spaces
is PQ-symmetric can be obtained as follows. First, it is possible to modify the
proof of Tukia and V\"ais\"al\"a \cite[Theorem 2.23]{TukVais} to show that 
a local PQ-symmetric embedding $f\co X\to Y$ between metric spaces, where $X$ is compact, is PQ-symmetric.
Then observe that, since a similarity is PQ-symmetric (with $p=1=q$), a local similarity equivalence is locally PQ-symmetric. 
\end{remark}


\section{Homeomorphisms on uniformly perfect, ultrametric spaces}
\label{sec:homeo uniformly perfect, ultrametric}

In this section, it is shown that a bounded distortion homeomorphism  between bounded, complete, uniformly perfect,
pseudo-doubling, ultrametric spaces
is PQ-symmetric---see Corollary~\ref{cor:BDE PQ for pd}.
This is  the remaining part of the main result Theorem~\ref{thm:main}, namely, 
the sufficiency of bounded distortion for PQ-symmetry.
The proof of Corollary~\ref{cor:BDE PQ for pd} consists of a reduction to the end spaces of
simplicial, bushy  $\br$-trees. That reduction is contained in Theorem~\ref{prop:BQC implies PQ}. 
This section also contains the definition of pseudo-doubling metric spaces and a brief discussion of its
relationship to the well-known notion of doubling metric spaces. 

We begin by recalling the following definition of uniformly perfect for metric spaces and its relation to
bushy $\br$-trees.

\begin{definition} A metric space $X$ is {\it uniformly perfect} if
there is a constant $\mu \in (0,1)$ such that for every $x\in X$ and
every $r>0$, it follows that $B_r(x)\backslash B_{\mu r}(x)\neq \emptyset$
unless $X= B_r(x)$.
\end{definition}

Tukia and V{\"a}is{\"a}l{\"a} \cite{TukVais}
proved that a quasi-symmetric homeomorphism between uniformly perfect metric spaces is PQ-symmetric (see also
\cite[Theorem 11.3, page 89]{Heinonen}).

Now recall the following definition from Mosher, Sageev, and Whyte \cite{Mo}.

\begin{definition} An $\br$-tree $T$ is \emph{bushy} if there is
a constant $K>0$, called a {\it bushy constant}, such that for any point $x\in T$ there is
a point $y\in T$ such that $d(x,y)< K$ and $T\backslash \{y\}$ has at least
$3$ unbounded components.
\end{definition}

\begin{remark}\label{bushy-unif}
Note that a rooted, geodesically complete $\br$-tree $T$ is bushy if and only if 
$end(T,v)$ is uniformly perfect for some (respectively, for every) $v\in T$.
\end{remark}

\begin{remark}\label{unique tree} As Mosher, Sageev, and Whyte \cite[page 118]{Mo} point out,
any two bounded valence, locally finite, simplicial, bushy trees
are quasi-isometric. 
Therefore, any such tree is quasi-isometric to the infinite binary tree.
See also Bridson and Haefliger \cite[page 141, Exercise 8.20(2)]{BridsonHaefliger} for the special case of
regular, simplicial trees. 
\end{remark}

\begin{teorema}
\label{prop:BQC implies PQ} 
If a map $h\co end(T,v)\to end(T',w)$ between the end spaces
of rooted, geodesically complete, simplicial, bushy $\mathbb{R}$--trees is
a bounded distortion equivalence, 
then $h$ is PQ-symmetric.
\end{teorema}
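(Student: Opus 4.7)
The plan is to rephrase both hypotheses and conclusion in terms of the Gromov product at infinity (using Propositions~\ref{prop:bqc gpi} and~\ref{prop:pqs gpi}) and then prove that a single ``profile'' of $h$ is coarsely bi-Lipschitz. Let $A$ be a common bounded distortion constant for $h$ and $h^{-1}$, write $F'=h(F)$, and for $F\in end(T,v)$ and integer $s\geq 0$ at which the sphere $\{L:(F|L)_v=s\}$ is nonempty, set
\[\phi_F(s):=\inf\{(F'|L')_w:(F|L)_v=s\},\]
and analogously $\psi_F(t):=\inf\{(F|X)_v:(F'|X')_w=t\}$ for $h^{-1}$. By Proposition~\ref{prop:bqc gpi}, every $L$ with $(F|L)_v=s$ satisfies $(F'|L')_w\in[\phi_F(s),\phi_F(s)+A]$, so $(F'|G')_w=\phi_F((F|G)_v)\pm A$ for every $G$. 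Proving that $\phi_F$ is coarsely bi-Lipschitz with constants independent of $F$ will therefore yield the PQ-symmetric characterization of Proposition~\ref{prop:pqs gpi}.

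Two preliminary facts come almost directly from bounded distortion and an ultrametric triangle on the image. \emph{Coarse monotonicity}: for $s_0\leq s_1$ in the domain, choose $L_0,L_1$ with $(F|L_i)_v=s_i$ and $L_1$ following $F$ past depth $s_0$, so that $(L_0|L_1)_v=s_0=(L_0|F)_v$; bounded distortion at $L_0$ together with the ultrametric inequality on $(F',L'_0,L'_1)$ forces $\phi_F(s_1)\geq\phi_F(s_0)-A$, and the analog holds for $\psi_F$. \emph{Approximate inverse identity}: applying bounded distortion of $h^{-1}$ to an infimum-attaining end for $\phi_F(s)$ together with any other end on the image sphere $\{(F'|X')_w=\phi_F(s)\}$ gives $|\psi_F(\phi_F(s))-s|\leq A$, and symmetrically.

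The crucial step is the linear bound $\phi_F(s_1)-\phi_F(s_0)\leq C(s_1-s_0)+C$ for $s_0\leq s_1$ in the domain, where $C=C(A,K,K')$ depends only on $A$ and the bushy constants $K,K'$ of $T,T'$. Set $J=\phi_F(s_1)-\phi_F(s_0)$. Using bushyness of $T'$, pick roughly $N\approx J/K'$ integer depths $d_1<\cdots<d_N$ in $[\phi_F(s_0),\phi_F(s_1)]$ spaced by more than $2K'$, and for each $d_i$ an end $M'_i\in end(T',w)$ bifurcating from $F'$ at some depth $d_i^*$ with $|d_i^*-d_i|\leq K'$; the $d_i^*$ are then distinct integers. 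Setting $M_i:=h^{-1}(M'_i)$, coarse monotonicity of $\psi_F$ together with the approximate inverse identity confines every $(F|M_i)_v=\psi_F(d_i^*)\pm A$ to the interval $[s_0-O(A),s_1+O(A)]$; since $T$ is simplicial these Gromov products are integers, so by pigeonhole at least $N/(s_1-s_0+O(A))$ of the $M_i$ share a single sphere around $F$. Bounded distortion of $h$ on that sphere compresses the corresponding distinct integers $d_i^*$ to within $A$ of each other, forcing $N\leq (A+1)(s_1-s_0+O(A))$ and hence $J\leq C(s_1-s_0)+C$.

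Running the identical argument with $h^{-1}$ in place of $h$ gives the corresponding upper Lipschitz bound for $\psi_F$; combined with the approximate inverse identity this translates into the lower Lipschitz bound for $\phi_F$. Extending the estimates across the full integer range via the $K$-dense domain of $\phi_F$ (provided by bushyness of $T$), these coarse bi-Lipschitz estimates, read through $(F'|G')_w=\phi_F((F|G)_v)\pm A$, are exactly the inequality of Proposition~\ref{prop:pqs gpi}, so $h$ is PQ-symmetric with constants independent of the basepoint. The main obstacle is the single-jump estimate: it must simultaneously exploit bushyness of $T'$ (to populate the image interval with branching test ends), bounded distortion of $h^{-1}$ plus coarse monotonicity of $\psi_F$ (to confine the preimages to a bounded band of $F$-spheres), integrality of Gromov products coming from simplicialness of $T$ (to make the pigeonhole effective), and bounded distortion of $h$ (to convert the pigeonhole conclusion back into a jump bound), with the bushy constants carefully threaded through the counting.
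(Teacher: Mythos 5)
Your proposal is correct, and it is built from the same raw ingredients as the paper's proof --- bushyness of the target tree to populate the image interval between the two bifurcation depths with branching test ends, surjectivity of $h$ to pull them back, integrality of Gromov products coming from simpliciality, and the $A$-window property of bounded distortion from Proposition~\ref{prop:bqc gpi} --- but it extracts the quantitative bound differently. The paper argues by contradiction on a single triple $G_0,G_1,G_2$: the $k+2$ spheres $\mathcal{F}_i$ between the depths $t_0$ and $t_1$ map into $k+2$ intervals $I_i$ of length $A$, so an image interval longer than $(k+1)(A+2K)+4K+3A$ must contain a $2K$-gap disjoint from all the $I_i$; bushyness of $T'$ places a branch end $H$ in that gap, and its preimage $G$ must bifurcate from $G_0$ either before $x_0$ or after $x_{k+1}$, with each case killed by bounded distortion (the paper's cases (a) and (b)). You instead count: roughly $J/K'$ branch ends at distinct integer image depths must be absorbed by roughly $s_1-s_0$ domain spheres, each of which can account for at most $A+1$ distinct integers, giving $J\lesssim K'(A+1)(s_1-s_0)$ by pigeonhole. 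These are two sides of the same counting coin, so neither argument is more general than the other; what your version buys is that the coarse monotonicity of $\psi_F$ plus the approximate inverse identity replace the paper's explicit escape cases (a)/(b) with a uniform confinement statement, and the profile functions $\phi_F,\psi_F$ give a tidier reduction to the characterization in Proposition~\ref{prop:pqs gpi} than the paper's two-step verification (upper bound, then the reverse bound via the inverse map). The only points to tighten in a full write-up are routine: choose the depths $d_i$ inside $[\phi_F(s_0)+K',\phi_F(s_1)-K']$ so that the confinement interval is exactly $[s_0-O(A),s_1+O(A)]$ rather than acquiring extra $K'$ slop, and note that the infimum defining $\phi_F(s)$ is attained because Gromov products of distinct ends in a rooted simplicial tree are nonnegative integers.
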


\begin{proof}
Let $K>0$ be a bushy constant for both $T$ and $T'$, and
let $\e^A$ be the constant for the
bounded distortion equivalence, where $A\geq 0$.
Let $p = A+2K$, $q_1=4K+3A$,
$q_2=\max\{ q_1^{1/p}, q_1\e^{2A}\}$, and
$q=\max\{ q_1, q_2\}$.
We will show that $h$ is  PQ-symmetric with constants $p$ and $q$.

Consider any three points $G_0,G_1,G_2\in end(T,v)$ and suppose
$d(G_0,G_1)=d(G_1,G_2)\geq d(G_0,G_2)$. Let $t_0=(G_0|G_1)_v$ and
$t_1=(G_0|G_2)_v$. Then, $d(G_0,G_1)=t\cdot d(G_0,G_2)$ with
$t=\e^{t_1-t_0}\geq 1$. Since $T$ is a  simplicial $\br$-tree,
there exists $k\in\bn$  and vertices
$x_0, \dots, x_{k+1}\in T$ as shown in
Figure~\ref{fig:end homeo} such that
\begin{enumerate}
	\item $[G_0|G_1]=[v,x_0]$ and $[G_0|G_2]=[v,x_{k+1}]$
	\item $d(x_0,x_{k+1})=k+1$
	\item $d(v,x_i) = d(v,x_0) +i\in\bn$ for each $i=0,\dots, k+1$
	\item $x_i\in [v, x_{k+1}]$ for each $i=0,\dots, k+1$
\end{enumerate}
For each $i=0,\dots, k+1$, let $\mathcal{F}_i=\{F\in end(T,v) \ | \ [F|G_0]=[v,x_i] \}$.
Note that $\mathcal{F}_i$ may be empty.
\begin{figure}
\centering 
\includegraphics[scale=0.25]{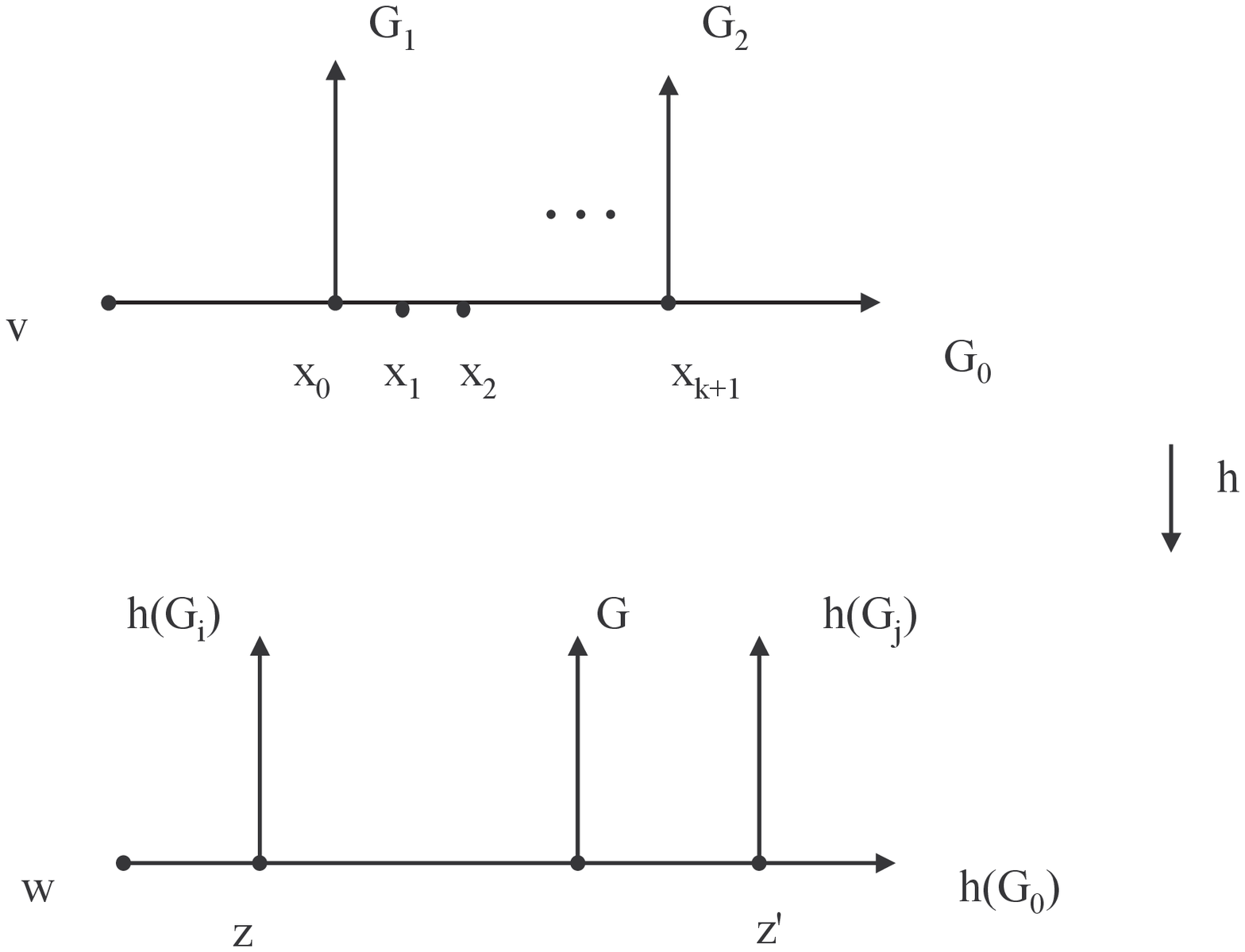}
\caption{$h\co end(T,v) \to end(T',w)$.}
\label{fig:end homeo}
\end{figure}
The first step is to show that 
$$d(h(G_0),h(G_1) \leq q_1\cdot t^p d(h(G_0),h(G_2)).$$
Suppose on the contrary that 
$$d(h(G_0),h(G_1)>(4K+3A)\cdot
t^{A+2K}d(h(G_0),h(G_2)),$$ 
which is to say,
$$(h(G_0)|h(G_1))_w-(h(G_0)|h(G_2))_w>(k+1)(A+ 2K)+4K+3A.$$ 
For any
two $F_1,F_2 \in \mathcal{F}_i$, $0\leq i \leq k+1$, 
the bounded distortion condition with respect to
$G_0$ implies that 
$$|(h(G_0)|h(F_1))_w-(h(G_0)|h(F_1))_w|\leq A.$$
It follows that for each $i=0,\dots, k+1$, there exists 
a subinterval $I_i$ of $h(G_0)$ of length $A$ such that
the bifurcation points of $h(G_0)$ and elements of
$h(\mathcal{F}_i)$ and  are all contained in the subinterval $I_i$.
Since $T'$ is bushy with constant $K$, there exists
$G\in end(T',w)\setminus\bigcup_{i=0}^{k+1}h(\mathcal{F}_i)$ 
such that \[(h(G_0)|h(G_1))_w+A<(G|h(G_0))_w<(h(G_0)|h(G_2))_w-A.\]
Suppose $z$ is the bifurcation point of $h(G_0)$ and $h(G_1)$,
and $z'$ the bifurcation point of $h(G_0)$ and $h(G_2)$. 
The interval $I := [G_0(||z||+A),G_0(||z'||-A)]$ has length at
least $(k+1)(A+ 2K)+4K+A$. Thus, 
$I$ contains an open interval of length $2K$ disjoint from each
of the $k+2$ subintervals $I_i$, for $0\leq i\leq k+1$.
Since $K$ is the bushy constant, there exists $H\in end(T',w)$ such that
the bifurcation point of $h(G_0)$ and $H$ is in $I$.
The surjectivity of $h$ implies there exists 
$G\in end(T,v)\setminus \bigcup_{i=0}^{k+1}\mathcal{F}_i$ such that $h(G)=H$.
There are two cases:
\begin{itemize} \item[(a)]
$(G|G_0)<||x_0||$. This leads to contradiction applying the
bounded distortion condition on $G$ with respect to $G_0,G_1$
in conjunction with the fact that $|(h(G)|h(G_0))_w-(h(G)|h(G_1))_w|> A$.
\item[(b)] $(G|G_0)>||x_{k+1}||$. This leads to a contradiction 
by applying the bounded distortion condition on
$G_2$ with respect to $G,G_0$ in conjunction with the fact that 
$(H|h(G_0))_w< (h(G_0)|h(G_2))_w-A$.
\end{itemize}
In either case, there is a contradiction. The conclusion is that
$d(h(G_0),h(G_1))\leq q_1 t^p \cdot d(h(G_0),h(G_2))$
and the first step is complete.

The second and final step is to show that
$$d(h(G_0),h(G_2)) \leq q_2\left(\frac{1}{t}\right)^{\frac{1}{p}} \cdot
d(h(G_0),h(G_1)),$$ 
which is to say, 
$$d(h(G_0),h(G_1))\geq \frac{1}{q_2}
t^{\frac{1}{p}} \cdot d(h(G_0),h(G_2)).$$
To this end note that the bounded distortion assumption together with the fact that
$d(G_0,G_1)=d(G_1,G_2)\geq d(G_0,G_2)$, implies that
\[d(h(G_0),h(G_1))\geq \e^{-A} \cdot d(h(G_0),h(G_2)).\]
Let $t'_0=(h(G_0)|h(G_1))_v$, $t'_1=(h(G_0)|h(G_2))_v$ and
$t'=\e^{|t_1-t_0|}$. By the same argument as before,
$(4K+3A)t'^{p}\geq t$. Therefore, $t'\geq
(\frac{t}{4K+3A})^{\frac{1}{p}}$. In other words, either
\[d(h(G_0),h(G_1))\geq \Big(\frac{1}{4K+3A}\Big)^{\frac{1}{p}}
t^{\frac{1}{p}}d(h(G_0),h(G_2))\] or \[\e^A d(h(G_0),h(G_1)) \geq
d(h(G_0),h(G_2))\geq \Big(\frac{1}{4K+3A}\Big)^{\frac{1}{p}}
t^{\frac{1}{p}} d(h(G_0),h(G_1)),\] where the second is only possible
if $t^{\frac{1}{p}}\leq (4K+3A)\e^{A}$.
In the first case, the desired result follows from the fact that
$q_2\geq (4K+3A)^{\frac{1}{p}}$.
In the second case, \[d(h(G_0),h(G_1)) \geq \e^{-A} d(h(G_0),h(G_2))\geq
\e^{-A} \frac{t^{\frac{1}{p}}}{(4K+3A)\e^{A}}d(h(G_0),h(G_2)).\]
Thus, the desired result  follows from the fact that $q_2\geq (4K+3A)\e^{2A}.$
In both cases, it is readily seen that $d(h(G_0),h(G_1))\geq
\frac{1}{q_2} t^{\frac{1}{p}}d(h(G_0),h(G_2))$, completing the second step.
Thus, $h$ is PQ-symmetric. 
\end{proof}

\begin{definition} A map between metric spaces $f\co X\to Y$ is
\emph{roughly isometric} if there is a constant $a>0$ such that
$d(x,x')-a\leq d(f(x)f(x'))\leq d(x,x')+a$ for all $x,x'\in X$.
If, in addition, $f(X)$ is a net in $Y$, then $f$ is  a
\emph{rough isometry} and $X$ and $Y$ are  {\it roughly isometric}
\end{definition}

Obviously, a rough isometry is a quasi-isometry.

\begin{prop}\label{simplicication} If $T$ is an $\mathbb{R}$-tree $T$, then
there exists a  simplicial $\mathbb{R}$-tree $S$ such that $T$ and $S$ are
(continuously) rough isometric.
In particular, $T$ and $S$ are (continuously) quasi-isometric.
\end{prop}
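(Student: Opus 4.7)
The plan is to construct a simplicial $\br$-tree $S$ together with a continuous rough isometry $f\co S\to T$. My strategy is to sample $T$ at the points that lie at integer distance from a chosen basepoint, augment these by the leaves of $T$ (to cover terminal arcs of length less than $1$), and use the resulting set as the vertex set of a rooted simplicial tree in which every edge is declared to have length $1$.

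Fix a basepoint $v_0\in T$ and set $h(v):=d_T(v_0,v)$. Define
\[V := \{v\in T : h(v)\in\bz_{\ge 0}\}\cup\{v\in T : v\text{ is a leaf of } T\},\]
and for $v\in V\setminus\{v_0\}$ let $\pi(v)$ be the unique point of $[v_0,v]$ at height $h(v)-1$ when $h(v)\in\bz_{>0}$, or at height $\lfloor h(v)\rfloor$ when $v$ is a leaf of non-integer height. In either case $\pi(v)\in V$, so the graph with vertex set $V$ and edges $\{v,\pi(v)\}$ is connected (iterating $\pi$ reaches $v_0$ in finitely many steps) and acyclic (each non-root has a unique parent). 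Declaring every edge isometric to $[0,1]$ makes this a simplicial $\br$-tree $S$. I would then define $f\co S\to T$ by $f(v)=v$ for $v\in V$ and, on each edge $\{v,\pi(v)\}$ identified with $[0,1]$, by the linear parameterization of the arc $[\pi(v),v]\subset T$ (whose $T$-length is at most $1$). The map $f$ is continuous, and since the point of $[v_0,x]$ at integer height $\lfloor h(x)\rfloor$ lies in $V\subseteq f(S)$ at $T$-distance $h(x)-\lfloor h(x)\rfloor<1$ from any $x\in T$, the image $f(S)$ is a $1$-net in $T$.

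The main remaining step is the additive-distortion bound $|d_S(s,t)-d_T(f(s),f(t))|\le C$ for a uniform constant $C$. For a vertex pair $v_1,v_2\in V$ at integer heights $h_1,h_2$, let $b$ denote the $T$-median of $v_0,v_1,v_2$ at height $h_b$; then $d_T(v_1,v_2)=h_1+h_2-2h_b$, while the $\pi$-LCA of $v_1$ and $v_2$ in $S$ is the integer-height vertex on $[v_0,b]$ at height $\lfloor h_b\rfloor$, producing $d_S(v_1,v_2)=h_1+h_2-2\lfloor h_b\rfloor$ and hence a discrepancy $2(h_b-\lfloor h_b\rfloor)<2$. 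The hard part is the bookkeeping needed to extend this estimate to cases in which one or both of $v_1,v_2$ is a non-integer-height leaf: each such endpoint contributes a single ``short'' edge of $S$-length $1$ but $T$-length less than $1$, shifting the estimate by at most a constant. The crucial point is that because the $S$-geodesic passes through the LCA exactly once, these corrections do not compound with $d_T(v_1,v_2)$, so $C$ remains a small absolute constant; extending to non-vertex points by linear interpolation on at most two terminal edges adds at most $2$ to $C$. With such a $C$ in hand, $f$ is a continuous rough isometry, and the ``in particular'' clause follows immediately since every rough isometry is a quasi-isometry.
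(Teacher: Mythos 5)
Your construction is essentially the paper's: both take the spheres of integer radius about a chosen root as the vertex set of $S$ and join consecutive ones by unit edges along the radial geodesics, so the proposal is correct and follows the same route. The only differences are matters of detail rather than of method: you enlarge the vertex set by the leaves of $T$ and build the comparison map in the direction $S\to T$, verifying the additive bound by an explicit least-common-ancestor computation (obtaining a slightly larger but still uniform constant), whereas the paper maps $T\to S$ by shifting the radial parameter down by one and simply truncating sub-integer terminal segments.
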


\begin{proof} 
Fix a root $v\in T$ and consider $(T,v)$ the rooted $\mathbb{R}$-tree. 
For each $n=1,2,3,\dots$, let
$S_n = \partial B(v,n)$.
Let $S_0=\{ w\}$, where $w$ is a point not in $\bigcup_{i=1}^\infty S_n$.
Define the rooted, simplicial $\br$-tree $(S,w)$ by joining with edges of length 1 all
the points in $S_1$ to  $w$ and each point $s \in S_k$ to the
unique point $t\in S_{k-1}$ such that $t\in [v,s]$. Thus, $\bigcup_{i=0}^\infty S_i$ is the vertex set of $S$.

Any  isometric embedding of the form
$F\co [0,\infty)\to T$ or $F\co [0,N]\to T$, where $F(0) = v$ and $N\in \bn$,
is completely determined by the sequence $\{ F(k)\}_{k=1}^\infty$
or $\{ F(k)\}_{k=1}^N$, respectively.
Since these sequences are in the vertex set of $S$, there is a corresponding
isometric embedding of the form
$F'\co [0,\infty)\to S$ or $F'\co [0,N]\to S$, where $F'(0) = w$.
Extend this construction to isometric embeddings of the form
$F\co [0,t]\to T$, where $F(0)=v$ and $t>0$, by letting
$F'$ denote $(F|[0,\lfloor t\rfloor ])'$,
where $\lfloor t\rfloor$ denotes the greatest integer less than or equal  to $t$.
Denote this association by $p\co F\mapsto F'$.
(Note that $p$ is surjective between the sets of isometric embeddings. However, $p$ need not be injective  because of the existence of domains of the
form $[0,t]$ for non-integral $t$.) 
Define the map $h:(T,v) \to (S,w)$ as follows: 
First, $h(x)=w$ for all $x \in B(v,1)$. Second, $h(F(t))=F'(t-1)$ for any isometric embedding 
of the form
$F\co [0,\infty)\to T$ or $F\co [0,t]\to T$, where $F(0) = v$ and $t\geq 1$. 
In particular, for every integer $k>0$ in the domain of $F$, $h(F(k))=F'(k-1)$ which, by the definition of $S$, is the point $F(k-1)$. 
In order to show that $h$ is well-defined, let
$F$ and $G$ be two isometric embeddings into $T$ of the type considered above.
If $t$ is in the domains of $F$ and $G$, then  $F(t)=G(t)$ implies that $F(\lfloor t\rfloor)=G(\lfloor t\rfloor)$.
In particular, $F'(t-1)=G'(t-1)$. Therefore, $h(F(t))=F'(t-1)=G'(t-1)=h(G(t))$. 

Finally, define $j:(S,w) \to (T,v)$ by
$j(H(t))=F(t)$, where $H\co [0,\infty)\to S$ or $H\co [0,N]\to S$ is an isometric embedding such that $H(0)=w$ and $N\in\bn$,
and $F$ is an isometric embedding in $T$ such that $p(F)=H$.
Note that $j(H(t))$ does not
depend on the choice of $F\in p^{-1}(H)$. For  $p(F)=p(G)$ and $F\not= G$ if and only if $F$ and $G$ have finite domains and
$F(k)=H(k)=G(k)$ and for all integers $k$ in their domains.
Clearly, $h$ is a roughly isometric map with with constant $a=1$. Moreover, $j$ shows that $h$ is a  rough isometry between
$T$ and $S$ .
\end{proof}

The following definition is well-known. The name ``doubling'' comes from the fact that  usually $C=2$. The version
given here appears in Bonk and Schramm \cite[Section 9]{BoSchr}.

\begin{definition} A metric space is  \emph{doubling} if for every $C>1$ there exist $N\in \bn$ such that:
if $0< r< R$ with $R/r=C$, then  every open ball of radius $R$ in $X$ can be covered by $N$ open balls of radius $r$.
\end{definition}

\begin{definition} A metric space is  \emph{pseudo-doubling} if for every $C>1$ there exist $N\in \bn$ such that:
if $0< r< R$ with $R/r=C$ and $x\in X$, then there are at most $N$ balls $B$ such that $B(x,r)\subseteq B \subseteq B(x,R)$.
\end{definition}

\begin{remark}\label{remark:pseudo-doubling} 
The following facts are easy to verify.
\begin{enumerate}
	\item Every doubling ultrametric space is pseudo-doubling.
	\item Let $X=end(T,v)$, where $(T,v)$ is a rooted, geodesically complete, simplicial $\br$-tree. 
	Then $X$ is pseudo-doubling. However, if the set of valences of the vertices
	of $T$ 	is unbounded, then $X$ is  not doubling. 
		\item The closed unit interval $[0,1]$ with its standard metric is a doubling metric space that is not pseudo-doubling. 
\end{enumerate}
\end{remark}

\begin{corollary}\label{cor:BDE PQ for pd}
If $f\co X\to Y$ is a bounded distortion equivalence between bounded, complete, uniformly perfect, 
pseudo-doubling ultrametric spaces, then $f$ is a $PQ$-symmetric homeomorphism. 
\end{corollary}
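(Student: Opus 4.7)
The plan is to reduce to Theorem~\ref{prop:BQC implies PQ} by replacing $X$ and $Y$ with end spaces of simplicial bushy $\br$-trees and transferring the bounded distortion hypothesis through the resulting bi-Lipschitz conjugation. First I would invoke Proposition~\ref{prop:assoc tree} to identify, up to isometry, $X=end(T_X,v_X)$ and $Y=end(T_Y,v_Y)$ for rooted, geodesically complete $\br$-trees; Remark~\ref{bushy-unif} then says that $T_X$ and $T_Y$ are bushy because $X$ and $Y$ are uniformly perfect. Applying Proposition~\ref{simplicication} next produces continuous rough isometries of constant $1$ from $T_X,T_Y$ onto rooted, simplicial $\br$-trees $(S_X,w_X),(S_Y,w_Y)$; these inherit geodesic completeness, and since bushiness is a quasi-isometry invariant they also remain bushy.

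A direct calculation from the explicit construction in Proposition~\ref{simplicication} shows that the induced homeomorphisms on end spaces $\tilde\phi_X\co end(T_X,v_X)\to end(S_X,w_X)$ and $\tilde\phi_Y\co end(T_Y,v_Y)\to end(S_Y,w_Y)$ satisfy $(\tilde\phi_X F\mid\tilde\phi_X G)_{w_X}=\lfloor (F\mid G)_{v_X}\rfloor$, and similarly for $\tilde\phi_Y$. In particular, each of $\tilde\phi_X$ and $\tilde\phi_Y$ is bi-Lipschitz with multiplicative constant at most $\e$, and hence PQ-symmetric by Remark~\ref{remark:PQ biH}. Set $\tilde f:=\tilde\phi_Y\circ f\circ\tilde\phi_X^{-1}\co end(S_X,w_X)\to end(S_Y,w_Y)$; the crux of the argument is to show that $\tilde f$ is a bounded distortion equivalence. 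Via the Gromov-product characterization of Proposition~\ref{prop:bqc gpi}, the bounded distortion property of $\tilde f$ reduces to the following stability claim for $f$: there exists a constant $A'$ such that whenever $F,G,H\in X$ satisfy $|(F\mid G)_{v_X}-(F\mid H)_{v_X}|<1$, it follows that $|(fF\mid fG)_{v_Y}-(fF\mid fH)_{v_Y}|\le A'$, together with the analogous statement for $f^{-1}$.

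The stability claim is where the pseudo-doubling hypothesis is essential. Pseudo-doubling applied to $X$ with ratio $\e$ bounds uniformly by a constant $N$ the number of distinct sphere-radii attained on the annulus of radii in $[\e^{-n-1},\e^{-n}]$ around any $F\in X$. On each single sphere-level the bounded-distortion hypothesis on $f$ yields the required bound directly; bushiness of $T_X$ supplies, for each level, auxiliary rays with prescribed Gromov products that let one bridge the bounded-distortion inequality across consecutive levels and thereby chain the bounds over the at most $N$ levels appearing in the annulus. The resulting constant $A'$ depends only on the BDE constant of $f$ and on $N$; pseudo-doubling of $Y$ is used symmetrically for $f^{-1}$. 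Once $\tilde f$ is known to be a bounded distortion equivalence between the end spaces of rooted, geodesically complete, simplicial, bushy $\br$-trees, Theorem~\ref{prop:BQC implies PQ} applies and yields that $\tilde f$ is PQ-symmetric. Finally, $f=\tilde\phi_Y^{-1}\circ\tilde f\circ\tilde\phi_X$ is a composition of three PQ-symmetric homeomorphisms, so it is PQ-symmetric by Remark~\ref{rem:TV inverses}. The main obstacle is the stability step above, which is precisely where the pseudo-doubling hypothesis is indispensable.
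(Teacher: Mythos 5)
Your overall strategy---realize $X$ and $Y$ as end spaces of geodesically complete bushy trees, conjugate by PQ-symmetric maps onto end spaces of \emph{simplicial} bushy trees, apply Theorem~\ref{prop:BQC implies PQ} to the conjugated map, and finish with Remark~\ref{rem:TV inverses}---is exactly the paper's strategy. The divergence, and the problem, is in how you make the trees simplicial. Your maps $\tilde\phi_X,\tilde\phi_Y$ from Proposition~\ref{simplicication} act on Gromov products by $t\mapsto\lfloor t\rfloor$, so they merge distinct spheres about a point into a single sphere; consequently the bounded distortion of $\tilde f$ is \emph{not} inherited from that of $f$, and you correctly isolate the needed ``stability claim'': $|(F\mid G)_{v_X}-(F\mid H)_{v_X}|<1$ should force $|(fF\mid fG)_{v_Y}-(fF\mid fH)_{v_Y}|\le A'$. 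The claim is true (it follows from the corollary itself), but your sketch of its proof does not go through as described. Bounded distortion only constrains triples with \emph{exactly equal} Gromov products (Proposition~\ref{prop:bqc gpi}), so an auxiliary ray $H'$ in $T_X$ with $(F\mid H')_{v_X}=\alpha_j$ only lets you compare images \emph{within} the level $\alpha_j$; it gives no inequality linking the image of level $\alpha_i$ to that of level $\alpha_{i+1}$, so there is nothing to ``chain.'' The actual mechanism that bridges distinct levels is the contradiction argument in the proof of Theorem~\ref{prop:BQC implies PQ}: assuming the image gap is large, one uses bushiness of the \emph{target} tree to produce a ray branching off $f(F)$ inside the gap but away from the $\le N$ image-intervals (each of width $\le A$) of the source levels, pulls it back by surjectivity, and contradicts bounded distortion. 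Your sketch points bushiness at the source tree and presents the step as a direct chaining, so as written the crux of your argument fails; repairing it amounts to re-proving a localized version of Theorem~\ref{prop:BQC implies PQ} before you are even in a position to apply it.

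The paper sidesteps this entirely by choosing a better simplicialization. Pseudo-doubling gives at most $N$ attainable Gromov-product values in each interval $(k,k+1)$; rather than collapsing them via the floor function, the paper \emph{redistributes} them injectively to the points $k+i/(N+1)$, producing a tree $[T_X]$ together with a rooted homeomorphism $T_X\to[T_X]$, and then rescales by $N+1$ to get a simplicial tree $S_X$. Because the conjugating map $h_X$ is induced by a rooted homeomorphism of trees, Remark~\ref{remark:rooted homeo} shows it preserves equality of Gromov products exactly, so spheres map to spheres and $\tilde f=h_Y\circ f\circ h_X^{-1}$ is a bounded distortion equivalence with no further argument; $h_X$ itself is a bi-Lipschitz map followed by a snowflake $d\mapsto d^{N+1}$, hence PQ-symmetric. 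If you want to keep your floor-based reduction, you must supply a genuine proof of the stability claim along the lines above; otherwise the cleaner fix is to replace Proposition~\ref{simplicication} by this sphere-preserving re-metrization.
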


\begin{proof} Let $X=end(T_X,v), Y=end(T_Y,w)$, where $T_X,T_Y$ are geodesically complete, bushy $\br$-trees (see Remark \ref{bushy-unif}). Since $X$ is pseudo-doubling, for $R/r=\e$ there exists a constant $N$ such that for any $x\in X$ there are at most $N$ balls $B$ such that $B(x,r)\subseteq B \subseteq B(x,R)$. 
It follows that
for each $k\in \bn$ there are at most $N$ different real numbers $\alpha_1,...,\alpha_n$ with $k< \alpha_1<\alpha_2<...<\alpha_n< k+1$ such that 
whenever $F,G \in end(T_X,v)$ and  $k<(F|G)_v<k+1$, then $(F|G)_v=\alpha_i$ for some $i=1,\dots,n$. 

We will now distort the tree $T_X$ up to rooted homeomorphism so that Remark~\ref{remark:rooted homeo} applies.
Divide each interval $[k,k+1]$ into $N+1$ subintervals of length $1/(N+1)$. For any pair of points $F,G \in end(T_X,v)$, if $(F|G)_v=\alpha_i$,
change it to $(F|G)_v=k+i/(N+1)$. Denote this new tree by $[T_X]$. It is immediate to see that 
the natural rooted homeomorphism $T_X\to [T_X]$ induces a 
bi-Lipschitz equivalence $end(T_X,v)\to end([T_X],v)$  with constant $e$.

Now consider $S_X$, a barycentric subdivision of $[T_X]$ obtained by  dividing each edge into $N+1$ subintervals of length $1/(N+1)$ and adding the corresponding $N$ new vertices. Consider each new edge to be of length 1. Then $S_X$ is similar to $[T_X]$ (the distance is multiplied by $N+1$) and, by construction of $[T_X]$, $S_X$ is simplicial. Also, the canonical map $h_X:end(T_X,v) \to end(S_X,v)$ is such that $(\frac{1}{\e} d(F,G))^{N+1}\leq d(h_X(F),h_X(Y))\leq (\e \cdot d(F,G))^{N+1}$. Thus, $h_X$ is PQ-symmetric.

Likewise distort $T_Y$ to $[T_Y]$ and construct $S_Y$ and $h_Y:end(T_Y,w) \to end(S_Y,w)$.

It follows that  $f$  induces a bounded distortion equivalence 
$$\tilde{f}:= h_Y\circ f \circ h_X^{-1} \co end(S_X,v) \to end(S_Y,w)$$ 
between ends of rooted, geodesically complete, simplicial, bushy $\br$-trees. 
Theorem~\ref{prop:BQC implies PQ} implies that $\tilde{f}: end(S_X,v) \to end(S_Y,w)$ is  PQ-symmetric. 
Since $f=h_Y^{-1}\circ \tilde{f} \circ h_X$,  Remark~\ref{rem:TV inverses} implies that $f$ is PQ-symmetric.
\end{proof}

\section{The examples}
\label{sec:examples}

This section contains several examples that illustrate the sharpness of the results in the previous sections.
In addition, some of the examples answer several questions raised by Mirani \cite{Mirani2006, Mirani2008}.
The basic building block for our examples is the infinite binary tree. We begin by introducing notation that will help us describe the examples.

\begin{notation}
\label{notation:examples}
Let $(T_2,v)$ denote the rooted, infinite binary tree, also known as the Cantor tree. 
Thus, $T_2$ is a locally finite, simplicial $\br$-tree, the root has valency two, and all other roots have valency three.
All edges are labeled $0$ or $1$; every vertex is incident to at least one edge labeled $0$ and to at least one edge labeled $1$.
Let $X_2= end(T,v)$. Thus, 
$$X_2= \{ x=(x_1,x_2, \dots) ~|~ x_i\in \{0,1\}, i=1,2,\dots\} = \prod_1^\infty\{0,1\}$$
with Gromov product at infinity given by
$(x|y)_v = i$ if and only if $x_j=y_j$ for $j\leq i$ and $x_{i+1}\not= y_{i+1}$.

If $a=(a_1, \dots, a_n)\in\prod_1^n\{ 0,1\}$, then
$$aX_2:= \{ y\in X_2 ~|~ y_i = a_i ~\text{for $1\leq i\leq n$}\}.$$
Thus, each $y\in aX_2$ can be written uniquely as $y=ax$, where $x\in X_2$. Note that $aX_2$ is a closed ball in $X_2$.

We denote certain infinite and finite sequences as follows:
\begin{align*}
	\bar 0 =& (0,0,0,\dots) \in X_2 &
	\bar 1 =& (1,1,1,\dots) \in X_2 \\
	\bar 0_n =& (0,\dots,0) \in \prod_1^n\{ 0,1\} &
	\bar 1_n =& (1,\dots,1) \in \prod_1^n\{ 0,1\}
\end{align*}
If $n=0$, then $\bar 0_n$ and $\bar 1_n$ denote the empty sequences.

Since much of Ghys and de la Harpe \cite{GhysdelaHarpe} and Mirani \cite{Mirani2006, Mirani2008}
is in the setting of simplicial $\br$-trees with the property that each vertex has valency at least $3$,
we want to provide examples in that class of trees whenever possible. A simple way to achieve this is to let
$(T_3, v))$ be the unique rooted, simplicial $\br$-tree containing $(T_2,v)$ as a rooted subtree such that
every vertex of $T_3$ has valency exactly $3$. In other words, $T_3$ is the infinite $3$-regular tree.

Let $X_3 = end(T_3,v)$. Note that $X_2$ and $X_3$ are compact, uniformly perfect, doubling ultrametric spaces.
Moreover, $X_2$ is a closed and open subspace of $X_3$.
\end{notation}

The first example shows that the hypothesis of Theorems~\ref{thm:main} and \ref{prop:BQC implies PQ} can not be changed from
``bounded distortion equivalence'' to ``bi-H\"older and conformal homeomorphism.''

\begin{ejp}\label{quasi-conf} 
There exist a bi-Hölder, conformal homeomorphism $h\co X_3\to X_3$ that does not 
have bounded distortion. In particular, $h$ is not  PQ-symmetric. 
\end{ejp}

\begin{proof}
For each $i\geq 1$, define the following sequences:
\begin{align*}
a_i =& \bar 1_{i-1}011\\
b_i =& \bar 1_{i-1}\bar0_{i+1}11\\
c_i =& \bar 1_{i-1}010\\
F_i =& \bar 1_{i-1}\bar 0	
\end{align*}
Define a homeomorphism $h\co X_3\to X_3$ by  
$$\begin{cases}
h(a_ix) = b_ix & \text{for all $x\in X_2$ and $i=1,2,\dots$}\\
h(b_ix) = a_ix & \text{for all $x\in X_2$ and $i=1,2,\dots$}\\
h(x) = x & \text{for all $x\in X_3\setminus\bigcup_{i=1}^\infty(a_iX_2\cup b_iX_2)$}
\end{cases}$$
To see that $h$ is conformal, note that if $\bar 1\not= F\in X_2$, then there exists $n_F\geq 1$ such that
the ${n_F}^{th}$ term of $F$ is $0$. It follows that the image under $h$ of any sphere centered at $F$ of radius 
$r \leq \e^{-1-n_F}$ is a sphere; thus, $D_h(F,r)=1$. Clearly, for all $r>0$, $D_h(\bar 1,r)=1$  and $D_h(F,r)=1$ whenever
$F\in X_3\setminus X_2$. Thus, $h$ is conformal.

To see that $h$ is bi-H\"older, 
it suffices to check that for any
pair of points $F,G\in X_3$, $\frac{1}{2} (F|G)_v -1 \leq
(h(F)|h(G))_v \leq 2(F|G)_v +1$. It only happens that 
$(h(F)|h(G))_v \neq (F|G)_v$  when 
\begin{enumerate}
	\item one of $F$ or $G$ is in $a_iX_2$ or $b_iX_2$ and the other is in $X_3 \setminus\bigcup_{i=1}^\infty(a_iX_2\cup b_iX_2)$, or
	\item  $F,G\in a_iX_2$ for some $i\geq 1$, or
	\item $F,G\in b_iX_2$ for some $i\geq 1$. 
\end{enumerate}
In each case, the required inequalities are easy to check.

To see that $h$ does not have bounded distortion, choose $G_i\in a_iX_2$ and $H_i\in c_iX_2$
for each $i\geq 1$. Note that $h(F_i) =F_i$, $h(G_i)\in b_iX_2$, and $h(H_i) = H_i$ for all $i\geq 1$.
Moreover, $(F_i|G_i)_v=(F_i|H_i)_v=i$ and $(h(F_i)|h(G_i))_v= 2i$ for all $i\geq 1$.
Thus, $|(h(F_i|h(G_i)_v-(h(F_i|h(H_i))_v| =i$ and Proposition~\ref{prop:bqc gpi} shows that
$h$ does not have bounded distortion.

Finally, it follows from Proposition~\ref{prop:QS implies BQE} that $h$ is not PQ-symmetric.
\end{proof}

The following example illustrates that the result of Tukia and V{\"a}is{\"a}l{\"a} \cite[Theorem 3.14]{TukVais} mentioned in
Remark~\ref{remark:PQ biH} above, 
that a PQ-symmetric homeomorphism  between bounded metric spaces is bi-H\"older, does not hold  if the 
hypothesis ``PQ-symmetric'' is weakened to ``quasiconformal,'' even for  ultrametric spaces  as 
nice as $X_3$.

\begin{ejp}
\label{ex:qc not biholder} 
There exists a conformal homeomorphism $h\co X_3 \to X_3$ that is not 
bi-H\"older.
\end{ejp}

\begin{proof}
For each $i\geq 1$, define the following sequences:
\begin{align*}
g_i =& \bar 1_{i-1}01\\
h_i =& \bar 1_{i-1}\bar0_{i^2-i}1
\end{align*}
Define a homeomorphism $h\co X_3\to X_3$ by  
$$\begin{cases}
h(g_ix) = h_ix & \text{for all $x\in X_2$ and $i=1,2,\dots$}\\
h(h_ix) = g_ix & \text{for all $x\in X_2$ and $i=1,2,\dots$}\\
h(x) = x & \text{for all $x\in X_3\setminus\bigcup_{i=1}^\infty(g_iX_2\cup h_iX_2)$}
\end{cases}$$
The argument to prove that $h$ is conformal is similar to
the one in Example~\ref{quasi-conf}. 
To see that $h$ is not bi-Hölder, for each $i=1,2,3,\dots$ choose $G_i\in g_iX_2$
and let $F_i\in X_2$ be as
in Example~\ref{quasi-conf}.
Note that $h(F_i)=F_i$ and $h(G_i)\in h_iX_2$ for all $i\geq 1$.
Thus, $d(h(F_i),h(G_i))=\e^{-i^2}$ while
$d(F_i,G_i)=\e^{-i}$ for all $i\geq 1$. It follows that $h$ is not bi-Hölder.
\end{proof}

In the converse direction, Mirani \cite{Mirani2006, Mirani2008}
speculated that bi-H\"older homeomorphisms on spaces such as $X_3$ might be
quasi-conformal. The following example shows this is not the case.

\begin{ejp} 
There exist a  bi-H\"older homeomorphism $h\co X_3\to X_3$ that is not 
not quasiconformal.
\end{ejp}

\begin{proof}
For each $i\geq 1$, define the following sequences:
\begin{align*}
g_i =& \bar 0_{4^i}11\\
h_i =& \bar 0_{2\cdot 4^i}11
\end{align*}
Define a homeomorphism $h\co X_3\to X_3$ by  
$$\begin{cases}
h(g_ix) = h_ix & \text{for all $x\in X_2$ and $i=1,2,\dots$}\\
h(h_ix) = g_ix & \text{for all $x\in X_2$ and $i=1,2,\dots$}\\
h(x) = x & \text{for all $x\in X_3\setminus\bigcup_{i=1}^\infty(g_iX_2\cup h_iX_2)$}
\end{cases}$$
The argument to prove that $h$ is bi-Hölder is similar to
the one in Example~\ref{quasi-conf}: one checks that
$F,G\in end(T,v)$, $\frac{1}{2} (F|G)_v -1 \leq (h(F)|h(G))_v \leq
2(F|G)_v +1$ for every $F, G\in X_3$.

To see that $h$ is not quasiconformal, it suffices to show that
$\underset{\epsilon\to 0}\limsup\, D_h\left(\bar 0,\epsilon\right) =\infty$.
To this end, let $K>0$ and  $N\in
\mathbb{N}$ be given. 
There exists $i_0>N$ such  $4^{i_0}>K$.
Choose $G_1\in g_{i_0}X_2$ and $G_2\in\bar 0_{4^{i_0}}10X_2$.
It follows that $h(\bar 0) =\bar 0$, $h(G_1)\in h_{i_0}X_2$, and $h(G_2) = G_2$.
Therefore, $(h(F_0)|h(G_2))_v=4^{i_0}$ and
$(h(F_0)|h(G_1))_v=2\cdot 4^{i_0}$.
Hence,
$D_h\left(\bar 0, \e^{-4^{i_0}}\right) \geq \frac{\e^{-4^{i_0}}}{\e^{-2\cdot4^{i_0}}} = \e^{4^{i_0}}.$
\end{proof}

For functorial reasons, Mirani \cite{Mirani2006, Mirani2008}
was interested in compositions of bi-H\"older, quasiconformal homeomorphisms
between spaces such as $X_3$.
The following example illustrates that these compositions need not be well-behaved.
This is in contrast to the situation for PQ-symmetric homeomorphisms as observed 
by Tukia and V{\"a}is{\"a}l{\"a} \cite[Theorem 2.2]{TukVais} (see Remark~\ref{rem:TV inverses}).

\begin{ejp} 
There exist two bi-H\"older, quasiconformal homeomorphisms 
$$h_1, h_2\co X_3\to X_3$$ such that 
the composition $h_2\circ h_1$ is not quasiconformal.
Moreover, $h_1$ and $h_2$ are bounded distortion equivalences and $h_2$ is conformal.
\end{ejp}

\begin{proof}
For each $i\geq 1$, define the following sequences:
\begin{align*}
g_i^a =& \bar 0_{2i}11\\
h_i^a =& \bar 0_{2i+1}11\\
g_i^b =& \bar 0_{4i}1\\
h_i^b =& \bar 0_{8i}1
\end{align*}
Define a homeomorphism $h_1\co X_3\to X_3$ by  
$$\begin{cases}
h_1(g_i^ax) = h_i^ax & \text{for all $x\in X_2$ and $i=1,2,\dots$}\\
h_1(h_i^ax) = g_i^ax & \text{for all $x\in X_2$ and $i=1,2,\dots$}\\
h_1(x) = x & \text{for all $x\in X_3\setminus\bigcup_{i=1}^\infty(g_i^aX_2\cup h_i^aX_2)$}
\end{cases}$$
Define a homeomorphism $h_2\co X_3\to X_3$ by  
$$\begin{cases}
h_2(g_i^bx) = h_i^bx & \text{for all $x\in X_2$ and $i=1,2,\dots$}\\
h_2(h_i^bx) = g_i^bx & \text{for all $x\in X_2$ and $i=1,2,\dots$}\\
h_2(x) = x & \text{for all $x\in X_3\setminus\bigcup_{i=1}^\infty(g_i^bX_2\cup h_i^bX_2)$}
\end{cases}$$
It is readily seen that $h_1$ is $\e$-quasiconformal. Indeed, $d_{h_1}(F,\epsilon)\leq \e$ for all
$F\in X_3$ and for all $\epsilon >0$, from which it also follows that $h_1$ is 
a  bounded distortion equivalence. Likewise,
$d_{h_2}(F,\epsilon)=1$ for all
$F\in X_3$ and for all $\epsilon >0$, from which it also follows that $h_2$ is 
conformal and a  bounded distortion equivalence.
\end{proof}

The  next two examples show that ``uniformly perfect'' can not be weakened to ``perfect'' 
in Theorem~\ref{thm:main} and Corollary~\ref{cor:BDE PQ for pd}, and that the ``bushy'' condition is
needed in Theorem~\ref{prop:BQC implies PQ}.

\begin{ejp} 
\label{example:BQE not QS}
There exist a compact, perfect, pseudo-doubling, ultrametric space $Z$ and a conformal homeomorphism $h\co Z\to Z$ such that $h$ is a bounded distortion equivalence, but $h$ is not quasi-symmetric. Moreover,  $h$ can be chosen to be bi-H\"older or not bi-H\"older.
\end{ejp}

\begin{proof}
We begin by constructing compact perfect, perfect, ultrametric spaces $X$ and $Y$ and a conformal, bounded distortion equivalence
$f\co X\to Y$ such that $f$ is neither quasi-symmetric nor bi-H\"older.
Let $(T_2^i, v_i)$ denote a copy of the rooted, infinite binary tree $(T,v)$ for each $i\geq 1$.
Form the tree $T= T_2\bigvee_{i=1}^\infty T_2^i$ by attaching, for each $i\geq 1$, $T_2^i$ to $T_2$ by identifying $v_i$ with the vertex on $\bar 1\subseteq T_2$ that is a distance $i$ from $v$.
Note that $(T,v)$ is a rooted, geodesically complete, simplicial $\br$-tree. 

For each $i\geq 1$, let $X_2^i = end(T_2^i,v_i)$. Then 
$$X := end(T,v) = X_2 \bigcup_{i=1}^\infty\bar 1_iX_2^i.$$
As before, the points of $X_2$ are infinite sequences of $0$'s and $1$'s. For each $i\geq 1$, the points of $X_2^i$ are
denoted by infinite sequences of $0_i$'s and $1_i$'s.
A typical point of $X$ is either a point of $X_2$ or of the form $\bar 1_ix= (\underbrace{1,\dots, 1}_{i})x$, where $x\in X_2^i$.
Note that $X$ is a compact, uniformly perfect, pseudo-doubling, ultrametric space.

For each $i\geq 1$ define
$$Y_2^i = \{ y\in X_2^i ~|~ y =(\underbrace{x_1,\dots, x_1}_{i},\underbrace{x_2,\dots, x_2}_{i},\underbrace{x_3,\dots, x_3}_{i},\dots),
x_j\in\{0_i,1_i\} \text{ for each $j\geq 1$}\}.$$
Note that for each $i\geq 1$, $Y_2^i$ is a closed, uniformly perfect subset of $X_2^i$.
There is an evident subtree $S_2^i$ of $T_2^i$ such that $end(S_2^i,v_i) = Y_2^i$. The vertices of $S_2^i$ of valency $3$ are those vertices of $T_2^i$ 
that are a distance $ni$ from $v_i$ for some $n=1,2,3,\dots$.

Let $T'= T_2\bigvee_{i=1}^\infty S_2^i$ so that $(T',v)$ is a rooted, geodesically complete subtree of $(T,v)$.
For each $i\geq 1$, let $Y_2^i = end(S_2^i,v_i)$. Then 
$$Y := end(T',v) = X_2 \bigcup_{i=1}^\infty\bar 1_iY_2^i.$$
Note that $Y$ is a compact, perfect, pseudo-doubling, ultrametric space.
The simplicial $\br$-tree $T'$ is not bushy because for any $K>0$, if $i\geq 2K$, then there are points $x$ in $S_2^i$ such that if 
$y$ is within $K$ of $x$, then $T'\setminus \{y\}$ has exactly $2$ components. In particular, $Y$ is not uniformly perfect. 

Define $f\co X\to Y$ by
$$f(\bar 1_ix) = \begin{cases}
x & \text{if $i=0$ and $x\in X_2$}\\
\bar 1_i(\underbrace{x_1,\dots,x_1}_{i}, \underbrace{x_2,\dots,x_2}_{i}, \underbrace{x_3,\dots,x_3}_{i},\dots) & \text{if $i\geq 1$ and $x\in X_2^i$}
\end{cases}
$$
Note that $f(X_2) = Y_2$ and $f(X_2^i) = Y_2^i$ for all $i\geq 1$.

Clearly, $f$ is induced by a rooted homeomorphism $\hat{f}\co (T,v)\to (T',v)$; i.e.,
$f(F)= \hat{f}\circ F$ for each $F\in end(T,v)$. 
It follows from Remark~\ref{remark:rooted homeo} that
$f$ is conformal
and
$f$ is a bounded distortion equivalence.

To see that $f$ is not quasi-symmetric, choose $G\in X_2$ and $F_i, H_i\in X_2^i$ such that
$(F_i|H_i)_v= i+1$ for each $i\geq 1$.
Then $|(F_i|H_i)_v-(F_i|G)_v|=1$ and 
$$|(f(F_i)|f(H_i))_v-(f(F_i)|f(G))_v|=i$$ for each $i\geq 1$.
Proposition~\ref{prop:qs gpi} implies that $f$ is not quasi-symmetric.

To verify that $f$ is not bi-H\"older, 
choose for each $i\geq 1$, $F_i, G_i\in X_2^i$ such that $(F_i|G_i)_v=2i$. Then
$(f(F_i)|f(G_i))_v = i+i^2$. It follows that the criterion in  
Remark~\ref{remark:gpi biH} is violated. 

We have now constructed a conformal, bounded distortion equivalence $f\co X\to Y$ that is neither quasi-symmetric nor bi-H\"older.
In order, to get an example in which the domain and range are the same, 
let $Z= end(T\vee T', v)$ so that $Z= X\cup Y$ and define $h\co Z\to Z$ by $h_X= f$ and $h|Y = f^{-1}$.

Finally, we briefly indicate the modifications that need to be made in order to get a bi-H\"older example.
For each $i\geq 1$ replace $Y_2^i$ by
$$\hat{Y}_2^i = \{ y\in X_2^i ~|~ y =(\underbrace{x_1,\dots, x_1}_{i},x_2,x_3,\dots),
x_j\in\{0_i,1_i\} \text{ for each $j\geq 1$}\}$$
and replace  $f$ by
$$\hat{f}(\bar 1_ix) = \begin{cases}
x & \text{if $i=0$ and $x\in X_2$}\\
\bar 1_i(\underbrace{x_1,\dots,x_1}_{i}, x_2,x_3,\dots) & \text{if $i\geq 1$ and $x\in X_2^i$}
\end{cases}
$$
These changes are enough to construct a bi-H\"older, conformal, bounded distortion equivalence  that is not quasi-symmetric.
\end{proof}

\begin{ejp} 
\label{example:QS not PQS}
There exist a compact, perfect, pseudo-doubling, ultrametric space $Z$ and a conformal homeomorphism $h\co Z\to Z$ such that $h$ is quasi-symmetric, but $h$ is not PQ-symmetric. Moreover,  $h$ can be chosen to be bi-H\"older or not bi-H\"older.
\end{ejp}

\begin{proof}
These examples are modifications of Example~\ref{example:BQE not QS}. Therefore, we will just briefly indicate the changes that need to be made.

We begin by showing how to produce the example that is not bi-H\"older. Let $Y$ be as in Example~\ref{example:BQE not QS}.
Construct a space $W$ as $Y$ is constructed above---except that $Y_2^i$ is replaced by
$$\widetilde{Y}_2^i = \{ y\in X_2^i ~|~ y =(\underbrace{x_1,\dots, x_1}_{i^2},\underbrace{x_2,\dots, x_2}_{i^2},\underbrace{x_3,\dots, x_3}_{i^2},\dots),
x_j\in\{0_i,1_i\} \text{ for each $j\geq 1$}\}.$$
Let $$\widetilde{Y}
  = X_2 \bigcup_{i=1}^\infty\bar 1_i\widetilde{Y}_2^i$$
and define $\tilde{f}\co Y\to \widetilde{Y}$ by
$\tilde{f}(x) = x$ if $x\in X_2$
and
$$\tilde{f}(\bar 1_i(\underbrace{x_1,\dots,x_1}_{i}, \underbrace{x_2,\dots,x_2}_{i}, \underbrace{x_3,\dots,x_3}_{i},\dots))
= 
\bar 1_i(\underbrace{x_1,\dots,x_1}_{i^2}, \underbrace{x_2,\dots,x_2}_{i^2}, \underbrace{x_3,\dots,x_3}_{i^2},\dots)$$
if $i\geq 1$ and $(x_1,x_2,x_3,\dots) \in X_2^i$.
It follows from Remark~\ref{remark:rooted homeo} that
$\tilde{f}$ is conformal.
One can verify that $\tilde{f}$ is quasi-symmetric
by invoking Proposition~\ref{prop:qs gpi} with $$\gamma(t) =\begin{cases} t^2 & \text{if $t\geq 0$}\\
t & \text{if $t<0$.}\end{cases}$$
To verify that $f$ is not bi-H\"older, 
choose for each $i\geq 1$, $F_i, G_i\in Y_2^i$ such that $(F_i|G_i)_v=2i$. Then
$(\tilde{f}(F_i)|\tilde{f}(G_i))_v = i+i^2$. It follows that the criterion in  
Remark~\ref{remark:gpi biH} is violated. 
To see that $\tilde{f}$ is not PQ-symmetric, choose $G\in X_2$ and $F_i, H_i\in Y_2^i$ such that
$(F_i|H_i)_v= 2i$ for each $i\geq 1$.
Then $|(F_i|H_i)_v-(F_i|G)_v|=i$ and 
$$|(\tilde{f}(F_i)|\tilde{f}(H_i))_v-(\tilde{f}(F_i)|\tilde{f}(G))_v|=i^2$$ for each $i\geq 1$.
Proposition~\ref{prop:pqs gpi} implies that $f$ is not PQ-symmetric.
The passage from $\tilde{f}\co Y\to\tilde{Y}$ to $h\co Z\to Z$ takes place as in Example~\ref{example:BQE not QS}.

To construct the bi-H\"older example, 
replace $\tilde{f}\co Y\to\tilde{Y}$ by $\tilde{f}'\co Y'\to\widetilde{Y}'$
with the following explanations.
First, just as $Y$ arises by attaching trees $S_2^i$ to points on $\bar 1$ a distance $i$ from $v$, the space $Y'$ arises by
attaching those trees to points a distance $i^2$ from $v$.
Second, just as $\widetilde{Y}$ arises from a modification of $Y$ by replacing each $Y_2^i$ by $\widetilde{Y}_2^i$, the space
$\widetilde{Y}'$ arises by replacing $Y_2^i\subseteq Y'$ by
$$\left(\widetilde{Y}_2^i\right)' = \{ y\in X_2^i ~|~ y =(\underbrace{x_1,\dots, x_1}_{i^2},\underbrace{x_2,\dots, x_2}_{i},\underbrace{x_3,\dots, x_3}_{i},\dots),
x_j\in\{0_i,1_i\} \text{ for each $j\geq 1$}\}.$$
The map $\tilde{f}'$ is obtained by modifying $\tilde{f}$ in the obvious way.
\end{proof}

The final example shows the necessity of the ``pseudo-doubling'' in Theorem~\ref{thm:main} and
Corollary~\ref{cor:BDE PQ for pd}.

\begin{ejp}
\label{example:BDE not PQS}
There exists a compact, uniformly perfect, ultrametric space $X$ that is not pseudo-doubling and
a bounded distortion equivalence $f\co X\to X_3$ that is not $PQ$-symmetric.
Moreover, $f$ is not bi-H\"older.
\end{ejp}

\begin{proof}
For each $i\in\bn$ consider the edge $e_i$ on the tree $T_2\subseteq T_3$ joining $\bar 1_i$ to $\bar 1_{i+1}$.
Choose points  $\bar 1_i=x_i^0,x_i^1,\dots,x_i^i, x_i^{i+1}=\bar 1_{i+1}$
dividing $e_i$ into $i+1$ subintervals of length $1/(i+1)$.
For each $i\in\bn$ and to each point $x_i^j$, $1\leq j\leq i$, attach a copy $T_i^j$
of $T_2\subseteq T_3$ to $T_3$ by identifying the root $v_i^j$ of $T_i^j$ to $x_i^j$.
Let $T= T_3\bigvee_{i=1}^\infty\bigvee_{j=1}^i T_i^j$ be the resulting tree and let $X=end(T,v)$.
Define $f\co X\to X_3$ as follows. If $x\in (X\setminus end(T_2,v))\cup 0X_2$, define $f(x)=x$.
If $x\in 1X_2$ is of the form $x=\bar 1_i$ for some $i\in\bn$, define $f(x) = \bar 1_{i^2}y$.
Any other point $x\in X$ may be written (using self-explanatory notation) as
$x=\bar 1_ix_i^1\dots x_i^jz$, where $i\in\bn$, $1\leq j\leq i$, and 
$z=z_1z_2z_3\dots\in X_2$. For such an $x$, define $f(x) = \bar 1_{i+2j+z_1-1}z_2z_3z_4\dots$.
It can be verified that $f$ is a bounded distortion equivalence (with constant $\e$) and that $f$
is neither PQ-symmetric nor bi-H\"older.
\end{proof}


\bibliographystyle{plain}
\bibliography{Bounded_distortion}

\end{document}